\documentclass[12pt,a4paper]{amsart}
\usepackage{algorithm}
\usepackage{algorithmic}
\usepackage{amssymb}
\usepackage{eucal}
\usepackage{graphicx}
\usepackage{amsmath}
\usepackage{amscd}
\usepackage[all]{xy}           
\usepackage{tikz}
\usepackage{tikz-qtree}
\usepackage{amsfonts,latexsym}
\usepackage{xspace}
\usepackage[T1]{fontenc}
\usepackage{tikz-cd}
\usepackage{txfonts}

\usepackage{epsfig}
\usepackage{float}
\usepackage{color}
\usepackage{fancybox}
\usepackage{colordvi}
\usepackage{multicol}
\usepackage{colordvi}
\ifpdf
\usepackage[colorlinks,final,backref=page,hyperindex]{hyperref}
\else
\usepackage[colorlinks,final,backref=page,hyperindex]{hyperref}
\fi
\usepackage[active]{srcltx} 
\usepackage{mathrsfs} 
\usepackage{diagbox}

\newcommand{\nc}{\newcommand}
\newcommand{\delete}[1]{}

\nc{\mlabel}[1]{\label{#1}}  
\nc{\mcite}[1]{\cite{#1}}  
\nc{\mref}[1]{\ref{#1}}  
\nc{\meqref}[1]{~\eqref{#1}} 
\nc{\mbibitem}[1]{\bibitem{#1}} 

\delete{
\nc{\mlabel}[1]{\label{#1}  
{\hfill \hspace{1cm}{\bf{{\ }\hfill(#1)}}}}
\nc{\mcite}[1]{\cite{#1}{{\bf{{\ }(#1)}}}}  
\nc{\mref}[1]{\ref{#1}{{\bf{{\ }(#1)}}}}  
\nc{\meqref}[1]{~\eqref{#1}{{\bf{{\ }(#1)}}}} 
\nc{\mbibitem}[1]{\bibitem[\bf #1]{#1}} 
}

\newtheorem{thm}{Theorem}[section]

\newtheorem{question}[thm]{Question}

\newtheorem{coro}[thm]{Corollary}
\theoremstyle{definition}
\newtheorem{defi}[thm]{Definition}

\newtheorem{rmk}[thm]{Remark}

\nc{\tred}[1]{\textcolor{red}{#1}}
\nc{\tblue}[1]{\textcolor{blue}{#1}}
\nc{\tgreen}[1]{\textcolor{green}{#1}}
\nc{\tpurple}[1]{\textcolor{purple}{#1}}
\nc{\btred}[1]{\textcolor{red}{\bf #1}}
\nc{\btblue}[1]{\textcolor{blue}{\bf #1}}
\nc{\btgreen}[1]{\textcolor{green}{\bf #1}}
\nc{\btpurple}[1]{\textcolor{purple}{\bf #1}}

\makeatletter

\newcommand*\bigcdot{\mathpalette\bigcdot@{.5}}
\newcommand*\bigcdot@[2]{\mathbin{\vcenter{\hbox{\scalebox{#2}{$\m@th#1\bullet$}}}}}
\makeatother

\newtheorem{innercustomgeneric}{\customgenericname}
\providecommand{\customgenericname}{}
\newcommand{\newcustomtheorem}[2]{%
\newenvironment{#1}[1]
{%
\renewcommand\customgenericname{#2}%
\renewcommand\theinnercustomgeneric{##1}%
\innercustomgeneric
}
{\endinnercustomgeneric}
}
\newcustomtheorem{customthm}{Theorem}

\theoremstyle{definition}

\newtheorem{remark}[thm]{Remark}

\nc{\name}[1]{{\bf #1}}

\nc{\ot}{\otimes}
\nc{\bfk}{\mathbf{k}}
\nc{\id}{\mathrm{id}}
\nc{\calf}{\mathcal{F}}
\nc{\calp}{\mathcal{P}}
\nc{\calq}{\mathcal{Q}}

\allowdisplaybreaks

\newcommand{\N}{\mathbb{N}}

\nc{\Res}{\mathrm{Res}}

\def \ra {\rightarrow}

\def\<{\langle}
\def\>{\rangle}

\nc{\mim}{\mathrm{im}}

\nc{\supp}{\text{supp}}
\nc{\Map}{\text{Map}}
\nc{\CSG}{\text{CSG}}

\nc{\fncmna}{$Nov_A^{NC}[X]$\xspace}	

\nc{\CM}{\text{CM}}		
\nc{\KCM}{\text{KCM}}	%

\nc{\mda}{multi-differential algebra}	
\nc{\mdas}{multi-differential algebras}	
\nc{\Mda}{Multi-differential algebra}	
\nc{\Mdas}{Multi-differential algebras}	

\nc{\cmdca}{commuting multi-differential commutative algebra\xspace}
\nc{\cmdnca}{commuting multi-differential noncommutative algebra\xspace}
\nc{\ncmdca}{noncommuting multi-differential commutative algebra\xspace}
\nc{\ncmdnca}{noncommuting multi-differential noncommutative algebra\xspace}

\nc{\cmdcas}{commuting multi-differential commutative algebras\xspace}
\nc{\cmdncas}{commuting multi-differential noncommutative algebras\xspace}
\nc{\ncmdcas}{noncommuting multi-differential commutative algebras\xspace}
\nc{\ncmdncas}{noncommuting multi-differential noncommutative algebras\xspace}

\nc{\Cmdca}{Commuting multi-differential commutative algebra\xspace}
\nc{\Cmdnca}{Commuting multi-differential noncommutative algebra\xspace}
\nc{\Ncmdca}{Noncommuting multi-differential commutative algebra\xspace}
\nc{\Ncmdnca}{Noncommuting multi-differential noncommutative algebra\xspace}

\nc{\Cmdcas}{Commuting multi-differential commutative algebras\xspace}
\nc{\Cmdncas}{Commuting multi-differential noncommutative algebras\xspace}
\nc{\Ncmdcas}{Noncommuting multi-differential commutative algebras\xspace}
\nc{\Ncmdncas}{Noncommuting multi-differential noncommutative algebras\xspace}

\nc{\cmdcac}{\rm{CMDCA}\xspace}	
\nc{\cmdncac}{\rm{CMDNCA}\xspace}	
\nc{\ncmdcac}{\rm{NCMDCA}\xspace}	
\nc{\ncmdncac}{\rm{NCMDNCA}\xspace} 

\nc\MNE{\text{MNE}_{\Omega}(\Omegax)}
\nc\fmna{\text{Nov}_{\Omega}(\Omegax)}
\nc\fmma{\text{Mag}_{\Omega}(\Omegax)}
\nc\fmdc{\text{CD}_{\Omega}(\Omegax)}
\nc\fmdca{\text{PCD}_{\Omega}(\Omegax)}

\nc\Omegax{X}
\nc\NE{\text{NE}(\Omegax)}
\nc\fnp{\triangleright} 
\nc\fna{\text{Nov}(\Omegax)}
\nc\fma{\text{Mag}(\Omegax)}
\nc\fdc{\text{CD}(\Omegax)}
\nc\fdca{\text{CD}(\Omegax)_0}
\nc{\PBTN}{\text{PBT}(\Omegax)}
\nc{\PMTN}{\text{PMT}(\Omegax)}
\nc{\BTN}{\text{BT}(\Omegax)}
\nc{\MTN}{\text{MT}(\Omegax)}

\nc{\Mag}{\text{Mag}}
\nc{\Nov}{\text{Nov}}
\nc{\I}{\text{I}}
\nc{\PBT}{\text{PBT}}
\nc{\PMT}{\text{PMT}}
\nc{\BT}{\text{BT}}
\nc{\MT}{\text{MT}}

\nc{\cmna}{multi-Novikov algebra\xspace}
\nc{\cmncna}{multi-noncommutative Novikov algebra\xspace}
\nc{\ncmna}{noncommuting multi-Novikov algebra\xspace}
\nc{\ncmncna}{noncommuting multi-noncommutative Novikov algebra\xspace}

\nc{\cmdas}{multi-differential algebras\xspace}
\nc{\cmnas}{multi-Novikov algebras\xspace}
\nc{\cmncnas}{multi-noncommutative Novikov algebras\xspace}
\nc{\ncmnas}{noncommuting multi-Novikov algebras\xspace}
\nc{\ncmncnas}{noncommuting multi-noncommutative Novikov algebras\xspace}

\nc{\Cmna}{Multi-Novikov algebra\xspace}
\nc{\Cmncna}{Multi-noncommutative Novikov algebra\xspace}
\nc{\Ncmna}{Noncommuting multi-Novikov algebra\xspace}
\nc{\Ncmncna}{Noncommuting multi-noncommutative Novikov algebra\xspace}

\nc{\Cmnas}{Multi-Novikov algebras\xspace}
\nc{\Cmncnas}{Multi-noncommutative Novikov algebras\xspace}
\nc{\Ncmnas}{Noncommuting multi-Novikov algebras\xspace}
\nc{\Ncmncnas}{Noncommuting multi-noncommutative Novikov algebras\xspace}

\nc{\mpre}{{\text{pre-}}}
\nc{\genpre}{{\text{pre-}}(W)}
\nc{\morpre}{\Phi}
\nc{\relpre}{\text{pre-}(S)}
\nc{\oppre}{\text{pre-}(\calq)}
\nc{\genpost}{\text{post-}(W)}
\nc{\morpost}{\Psi}
\nc{\relpost}{\text{post-}(S)}
\nc{\oppost}{\text{post-}(\calq)}

\nc{\mprer}{{\text{rpre-}}}
\nc{\genprer}{{\text{rpre-}}(W)}
\nc{\morprer}{\Phi_r}
\nc{\relprer}{\text{rpre-}(S)}
\nc{\opprer}{\text{rpre-}(\calq)}

\nc{\dfgen}{V} \nc{\dfrel}{\Lambda}
\nc{\dfgenb}{\vec{v}} \nc{\dfrelb}{\vec{r}}
\nc{\dfgene}{v} \nc{\dfrele}{r}

\nc{\obb}{\ \begin{picture}(-1,1)(-1,-3)\circle*{3}\end{picture}\ \,}

\nc{\odiff}{\mathcal{DA}} 
\nc{\oWNov}{\mathcal{Nov}_\lambda} 
\nc{\onov}{\mathcal{Nov}}	

\nc{\omwdiff}{{\mathcal{MD}iff_\Omega}}
\nc{\oncmdnca}{\mathcal{MDA}^{\mathrm{NC}}_{\mathrm{NC}}}
\nc{\ocmdnca}{\mathcal{MDA}^{\mathrm{NC}}}
\nc{\oncmdca}{\mathcal{MDA}_{\mathrm{NC}}}
\nc{\ocmdca}{\mathcal{MDA}}
\nc{\ad}{\text{ad}}

\nc{\cfreediff}{F_{\mathrm{Diff}}(X)}	
\nc{\ncfreediff}{F_{\mathrm{Diff}}^{\mathrm{NC}}(X)} 
\nc{\ncdiffpoly}{\bfk^{\mathrm{NC}}\{X\}}	

\nc{\omnov}{\mathcal{N}_\Omega}
\nc{\omdiff}{\mathcal{MD}iff}
\nc{\novop}{\rhd}

\nc{\freecmdiff}{F_{\mathrm{CDiff}}^{C}(X)}	
\nc{\cmdiffpoly}{\bfk_{\Omega}^{\mathrm{C}}\{X\}}	
\nc{\mapfinsupp}{\mathrm{Map}^{\mathrm{fs}}}

\nc{\NC}{\mathrm{NC}}
\nc{\ncmder}{\Delta_{\NC,\Omega}}

\nc{\comp}{induced\xspace}
\nc{\mcompst}{induced structure\xspace}
\nc{\mcompsts}{induced structures\xspace}
\nc{\Mcompst}{Induced structure\xspace}
\nc{\Mcompsts}{Induced structures\xspace}
\nc{\precompst}{pre-induced structure\xspace}
\nc{\precompsts}{pre-induced structures\xspace}
\nc{\postcompst}{post-induced structure\xspace}
\nc{\postcompsts}{post-induced structures\xspace}
\nc{\Precompst}{Pre-induced structure\xspace}
\nc{\Precompsts}{Pre-induced structures\xspace}
\nc{\Postcompst}{Post-induced structures\xspace}
\nc{\Postcompsts}{Post-induced structures\xspace}

\nc{\mcomp}{induced\xspace}
\nc{\mcomps}{induced\xspace}
\nc{\Mcomp}{Induced\xspace}
\nc{\Mcomps}{Induced\xspace}
\nc{\precomp}{pre-induced\xspace}
\nc{\precomps}{pre-induced\xspace}
\nc{\postcomp}{post-induced\xspace}
\nc{\postcomps}{post-induced\xspace}
\nc{\Precomp}{Pre-induced\xspace}
\nc{\Precomps}{Pre-induced\xspace}
\nc{\Postcomp}{Post-induced\xspace}
\nc{\Postcomps}{Post-induced\xspace}

\begin{document}

\title[Induced structures of operated algebras]{Induced structures of operated algebras with applications to multi-Novikov algebras}

\author{Li Guo}
\address{
Department of Mathematics and Computer Science,
Rutgers University,
Newark, NJ 07102, United States}
\email{liguo@rutgers.edu}

\author{Xiaoyan Wang}
\address{Department of School of Mathematical Sciences, East China Normal University, Shanghai 200241, China}
\email{wangxy@math.ecnu.edu.cn}

\author{Huhu Zhang}
\address{School of Mathematics and Statistics
Yulin University,
Yulin, Shaanxi 719000, China}
\email{huhuzhang@yulinu.edu.cn}

\date{\today}

\begin{abstract}
We provide a general notion of induced structures of operated algebras in the context of unary-binary operads. This notion fully captures the binary quadratic relations encoded by a unary-binary operad, thereby unifying and formalizing the various constructions that have appeared in the literature under the informal term of ``induced structures''. As an application, we show that the Novikov algebra and the recently introduced multi-Novikov algebra are the induced structures of the differential commutative algebra and the multi-differential commutative algebra respectively. We also explicitly determine the induced structure of the noncommuting multi-differential commutative algebra. 	
\end{abstract}

\subjclass[2020]{
17A30, 
18M60, 
18M70, 
17A50, 
12H05, 
17D25, 
17A36, 
16W25 
16S10, 
}

\keywords{operated algebra, operad, induced structure, Novikov algebra, differential algebra, multi-Novikov algebra, multi-differential algebra}

\maketitle

\vspace{-1.2cm}

\tableofcontents

\vspace{-1.2cm}

\allowdisplaybreaks

\section{Introduction}
\mlabel{s:intro}

Unifying the algebraic structures \comp from the actions of linear operators on algebras arising from diverse applications, this paper formulates an operadic framework to understand and fully determine such \mcompsts, with applications to multi-Novikov algebras. 

\subsection{Operated algebras}
\mlabel{ss:oprtalg}

Originated from diverse areas in mathematics, physics and engineering, the study of various linear operators on algebras has a long history. Well-known operators are endomorphisms, automorphisms and derivations. 
In the case of derivations, Ritt in his landmark work~\mcite{Ri1} a century ago introduced the concept of a differential algebra to provide an algebraic framework for differential equations. Abstracting from the derivation in analysis, a differential algebra is an (associative) algebra $R$ equipped with a linear operator $d$ that satisfies the following Leibniz rule.
\begin{equation}
d(xy)=d(x)y+xd(y), \quad x,\ y\in R.
\mlabel{eq:der10}
\end{equation}
Through the contributions of numerous mathematicians, including Kolchin, Kaplansky, and Singer, differential algebra has evolved into a substantial field encompassing differential Galois groups, differential algebraic groups, and differential algebraic geometry. This area has found broad applications in number theory, logic, and the mechanical proof of mathematical theorems~\mcite{Ko,PS,Wu2}.	

There are numerous other linear operators which are less well-known but nevertheless important and long-studied, such as Rota-Baxter operators from probability, Nijenhuis operators from geometry, averaging operators and Reynolds operators from fluid mechanics, with recent applications~\mcite{Bax,Bi,BHZ,CK,Da,GGL,HS,Ni,Re,Sh,TS}. 

These operator-acted structures are organized under the general term of an {\bf operated algebra}, defined to be an algebra equipped with certain type of linear operators~\mcite{Gop}, which can be tracked back to Kurosh's notion of $\Omega$-algebras\,\mcite{Ku}. 

\subsection{\Mcomp structures from operated algebras}
Compared to the traditional algebraic structures defined by binary or higher arity operations, systematic study of operated algebras poses new challenges. A fascinating pattern that have emerged over the years is the connection from an operated algebra to traditional algebraic structures, which has been generally called a \mcompst. The earliest instance was discovered by S.\,Gelfand that was recorded in\,\mcite{GD1}.
Recall that a {\bf Novikov algebra} is a nonassociative algebra $(R,\novop)$, 
where $\novop$ satisfies the identities 
\begin{eqnarray}\mlabel{eq:n1}
&(x\novop y)\novop z-x\novop(y\novop z)=(y\novop x)\novop z-y\novop(x\novop z)&
\\
&\mlabel{eq:n2}
(x\novop y)\novop z=(x\novop z)\novop y, \quad x, y, z\in R.&
\end{eqnarray}
Note that the first identity defines the pre-Lie algebra that is prominent in its own right\,\mcite{Bai,Man}. 

\begin{thm} \mlabel{t:gel}
$($Gelfand's Construction$)$
Let $(R,\cdot)$ be a commutative associative algebra equipped with a derivation $D$, that is $(A,\cdot,D)$ is a commutative differential algebra. Define
\begin{equation}
a\novop b:= a\cdot D(b), \ a, b\in R.
\mlabel{eq:diffprod}
\end{equation}
Then $(R,\novop)$ is a Novikov algebra. 
\end{thm}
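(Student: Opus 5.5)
The plan is a direct verification of the two identities \eqref{eq:n1} and \eqref{eq:n2} for the product $a\novop b = aD(b)$. We use only three facts: $\cdot$ is commutative and associative, and $D$ obeys the Leibniz rule \eqref{eq:der10}. Throughout, we write products in $R$ by juxtaposition and freely reorder factors.

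First we expand the iterated products. On the one hand,
\[
(x\novop y)\novop z=(xD(y))D(z)=xD(y)D(z).
\]
On the other hand, by the Leibniz rule,
\[
x\novop(y\novop z)=xD(yD(z))=xD(y)D(z)+xyD^2(z).
\]

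Identity \eqref{eq:n2} follows at once from the first expansion. The expression $xD(y)D(z)$ is symmetric in $y$ and $z$ because the multiplication is commutative, so $(x\novop y)\novop z=(x\novop z)\novop y$.

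For the left-symmetric identity \eqref{eq:n1}, subtracting the two expansions gives the associator
\[
(x\novop y)\novop z-x\novop(y\novop z)=-xyD^2(z).
\]
This is symmetric in $x$ and $y$, again by commutativity. Hence the associator of $(x,y,z)$ equals that of $(y,x,z)$, which is exactly \eqref{eq:n1}.

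There is no real obstacle here. The only point requiring care is applying the Leibniz rule to $D(yD(z))$ correctly, so that the $D(y)D(z)$ terms cancel in the associator. Once that is done, both required symmetries come directly from commutativity.
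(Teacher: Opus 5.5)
Your verification is correct. The Leibniz expansion $x\novop(y\novop z)=xD(y)D(z)+xyD^2(z)$ makes $(x\novop y)\novop z=xD(y)D(z)$ symmetric in $y,z$ and the associator $-xyD^2(z)$ symmetric in $x,y$. The paper states this classical result without giving its own proof, but your direct computation is the same Leibniz expansion it uses (with $D_\omega,D_\tau$ in place of $D$) in the proof of Theorem~\ref{thm:omnovfromcmdca}, so nothing needs to change.
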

Gelfand's construction is of fundamental importance in the theory of Novikov algebras. All complex Novikov algebras with dimensions no more than three and many important infinite-dimensional simple Novikov algebras can be realized by this construction or its linear deformations~\mcite{BM, BM1, X3}. It is eventually shown that every Novikov algebra is isomorphic to a subalgebra of some Novikov algebra obtained this way~\mcite{BCZ, DL}. Moreover, Gelfand's construction provides a natural right Novikov algebra structure on the vector space of Laurent polynomials~\mcite{HBG}, which is closely related with Novikov algebra affinization \mcite{BN}. An operadic study was carried out in\,\mcite{KSO}. 

With the renaissance of the operad theory and emergence of new algebraic structures, numerous instances of \mcompsts have appeared. For example, a Rota-Baxter Lie algebra induces a pre-Lie algebra or a post-Lie algebra; a Rota-Baxter associative algebra induces a dendriform algebra or a tridendriform algebra; an averaging associative (resp. commutative, resp. Lie) algebra induces a diassociative (resp. perm, resp. Leibniz) algebra; and a Nijenhuis associative algebra induces an NS-algebra and an N-dendriform algebra~\mcite{Ag,BBGN,BGN,Eb,GKo,Le,LG}. For algebras with multiple operators, matching Rota-Baxter associative (resp. Lie, resp. commutative) algebras of weight zero are shown to give rise to matching dendriform (resp. multi-pre-Lie, resp. matching Zinbiel) algebras that stemmed from regularity structure of stochastic PDEs\,\mcite{BHZ,Foi,GGZ19,Ha}. Recently, again from the regularity structures, the notion of multi-Novikov algebras was introduced and was shown to be induced from commutative algebras equipped with multiple commuting derivations~\mcite{BrD}. As related studies, induced structures from operators on Hopf algebras, Lie-type bialgebras, groups and braces have also appeared\,\mcite{BGLM,GGHZ,Go,GJSW,HBG,LST,LST2,ZLYG}.

While such phenomena are all called induced structures, their usage is quite informal, for example with no justification on why only the specific algebra should be called the particular induced structure. Take as an illustration the Novikov algebra which is regarded as the induced structure of the differential commutative algebra thanks to Gelfand's Theorem~\mref{t:gel}. Indeed the Novikov axioms are satisfied by the induced product in Eq.\,\meqref{eq:diffprod}. But by this standard, the pre-Lie algebra axiom \meqref{eq:n1} is also satisfied by the induced product and could also be called an induced structure of the differential commutative algebra. In the other direction, beyond the two Novikov algebra axioms, could there be a third independent identity that can also be induced from the differential commutative algebra? In fact, for the Nijenhuis associative algebra, an induced structure called NS algebra\,\mcite{Le} was introduced with four axioms. It was only later realized that there is a fifth axiom that is also satisfied by the induced products from Nijenhuis algebras\,\mcite{LG}. 
Thus it makes sense to ask for the ``fully'' induced structure of the differential commutative algebra. 
More precisely, we ask

\begin{question} 
{\rm (Converse of Gelfand's construction)} Is the Novikov algebra structure the only one (or the strongest one) that can be induced from a commutative differential algebra? 
\mlabel{q:daconv}
\end{question}

For general operated algebras, there are the following natural questions.

\begin{question}
What is the (fully) induced structure of an operated algebra?
\mlabel{q:what}	
\end{question}

\begin{question}
How to determine the (fully) induced structure of an operated algebra?	
\mlabel{q:how}
\end{question}

To give a precise formulation and answer to Question\,\mref{q:daconv} for Gelfand's construction, and to the more general Questions\,\mref{q:what} and \mref{q:how} for operated algebras, we first give a rigorous definition of \postcompst and \precompst of the operad of an operated algebra, that encode all binary quadratic relations that can be possibly induced from this operated algebra, answering Question\,\mref{q:what} and in particular Question\,\mref{q:daconv}. To address Question\,\mref{q:how}, we use multi-differential algebras to illustrate how one can determine their \precompsts. More precisely, we show that the multi-Novikov algebra recently introduced by Bruned and Dotsenko~\mcite{BD} is the \mcompst of the commuting multi-differential commutative algebra. 
In particular, the Novikov algebra is the \precompst, that is the ``fully'' induced structure of the differential commutative algebras, thereby answering Question\,\mref{q:daconv}. 
We also determine the \precompst of the noncommuting multi-differential commutative algebra, leading to the notion of the noncommuting multi-Novikov algebra. 
As noted in Remark\,\mref{rk:compstr}, there are generalizations and variations of these questions that might revealing other structures that can be induced from other algebra structures with operators. 

\subsection{Layout of the paper}
\mlabel{ss:layout}

This paper is organized as follows. 	

In Section~\mref{s:ind}, we work in the operadic framework and give a rigorous definition of the binary quadratic operad that can be induced from an operated algebra. They are called the pre-induced and post-induced structures of the operated algebra. 

Then the operadic definition of induced structures in Section~\mref{sec:diffcomp} is applied to multi-differential algebras in Section~\mref{sec:diffcomp} to show that the multi-Novikov algebra (in particular the Novikov algebra) is precisely the \precompst of the commuting multi-differential commutative algebra, making essential use of the constructions of free multi-differential algebras reviewed in Section~\mref{ss:mdas}. In particular, this gives an affirmative answer to Question\,\ref{q:daconv} (Remark\,\mref{r:novind}). 

In contrast to Section~\mref{ss:mdas}, which verifies that an existing structure fulfill the definition of a \mcompst; the purpose of Section~\mref{s:ncdifcaind} is to illustrate how an \mcompst can be effectively determined {\it from the ground up}.  Thus we start with the noncommuting multi-differential commutative algebra and determine its \precompst, arriving at the notion of a \ncmna. 

\section{\Mcompsts of operated algebras and operated operads}
\mlabel{s:ind}
This section first recalls the needed background on operads and then formulates the \mcompst of an operated algebra in the context of operads. 

\subsection{Background and notions on operads}
We first give notions and properties of a binary quadratic symmetric operad for later application. See~\mcite{LV} for details. 
\begin{defi}
\begin{enumerate}
\item
An {\bf $\mathbb{S}$-module} over $\bfk$ is a family
$$M :=\{M(n)\}_{n\geq0}=\{M(0),M(1),\ldots 
M(n),\ldots \}$$
of right $\bfk S_n$-modules $M{(n)}$, where $S_n$ is the symmetric group for $n\geq0$. If each $M(n)$ is the trivial $\bfk S_n$-module, 
that is, $M(n)$ is a $\bfk$-module, then $M$ is called an {\bf $\mathbb{N}$-module}.
\item
A {\bf nonsymmetric (linear) operad } is an $\mathbb{N}$-module $\calp=\{\calp(n)\}_{n\geq0}$ equipped with {\bf partial compositions}:
\begin{equation}
\circ_i:=\circ_{m,n,i}: \calp(m)\ot \calp(n)\longrightarrow \calp(m+n-1), \quad 1\leq i\leq m,
\mlabel{eq:opc}
\end{equation}
such that, for $\eta\in\calp(\ell), \mu\in\calp(m)$ and $\nu\in\calp(n)$, the following relations hold.
\begin{enumerate}
\item $
(\eta \circ_i \mu)\circ_{i-1+j}\nu = \eta\circ_i (\mu\circ_j\nu), \quad 1\leq i\leq \ell, 1\leq j\leq m.$
\mlabel{it:esc}
\item$(\eta\circ_i\mu)\circ_{k-1+m}\nu =(\eta\circ_k\nu)\circ_i\mu, \quad
1\leq i<k\leq \ell.$
\mlabel{it:epc}
\item
There is an element $\id\in \calp_1$ such that $\id\circ \mu=\mu$ and $\mu\circ\id=\mu$ for $\mu\in \calp_n, n\geq 0$.
\mlabel{it:id}
\end{enumerate}
\item A {\bf (symmetric linear) operad} is an $\mathbb{S}$-module $\calp=\{\calp(n)\}_{n\geq0}$   equipped with nonsymmetric linear operad structure, that satisfies the following additional conditions.
\begin{enumerate} 
\item 
For $\sigma\in S_m$, we have:
\begin{equation}
\mu^\sigma \circ_i \nu = (\mu\circ_{\sigma(i)} \nu)^{\sigma'}, \quad 1\leq i\leq m, 
\mlabel{eq:symop2}
\end{equation}
where $\sigma'\in S_{m-1+n}$ restricts to the bijection from $\{1,\ldots,m-1+n\}\backslash \{i,\ldots,i-1+n\}$ to $\{1,\ldots,m-1+n\}\backslash \{\sigma(1),\ldots,\sigma(i)-1+n\}$ according to $\sigma$ and restricts to the bijection from $\{i,\ldots,i-1+m\}$ to $\{\sigma(i),\ldots,\sigma(i)-1+m\}$ according to the identity (in the natural order). 
\item For $\tau\in S_n$, we have 
\begin{equation}
\mu \circ_i \nu^\tau = (\mu\circ_i\nu)^{\tau'}, 1\leq i\leq m, 
\mlabel{eq:symop1}
\end{equation}
where $\tau'\in S_{m-1+n}$ is the permutation which acts by the identity, except on the block $\{i,\ldots,i-1+n\}$ on which it acts via $\tau$. 
\end{enumerate}
\end{enumerate}
\mlabel{d:operad}
\end{defi}

A {\bf morphism of operads} from $\calp$ to $\calq$ is a family of $S_n$-equivariant maps $f_n: \calp(n)\to \calq(n)$ that is compatible with the compositions of operads.
An operad $\mathscr{P}$ is called {\bf reduced} if $\mathscr{P}(0) = 0$.

{\em In this paper, all operads are assumed to be reduced.}

For a vector space $V$, the {\bf endomorphism operad} ${\rm End}_V$ is given by
$${\rm End}_V(n):={\rm Hom}(V^{\otimes n},V),$$
where the action of $S_n$ on ${\rm End}_V(n)$ is given by
$$(f\cdot\sigma)(v_1,\ldots,v_n):=f\big(\sigma\cdot(v_1,\ldots,v_n)\big):=f(v_{\sigma^{-1}(1)},\ldots,v_{\sigma^{-1}(n)}). $$
The composition map $\gamma$ is given by the usual composition of multivariate functions.
For this example, the maps $\sigma'$ and $\tau'$ in Definition~\mref{d:operad} are the natural permutations on $\mathrm{End}_V(m+n-1)$ that are induced from $\sigma$ on $\mathrm{End}_V(m)$ and $\tau$ on $\mathrm{End}(n)$ respectively. 

Let $\calp$ be an operad. A {\bf $\calp$-algebra} is a vector space $V$ with a morphism $\rho: \calp\to {\rm End}_V$ of operads. We say that the operad $\calp$ {\bf encodes} $\calp$-algebras.

A class of operads can be given explicitly as follows~\cite[Sec. 7.6.2]{LV}.

Let $V=\{V(n)\}_{n\geq0}$ be an $\mathbb{S}$-module with $V(1)=\bfk \id$.
If $V(k)=0$ when $k\neq 1, 2$, then we call $V$ {\bf binary}. In this case, we simply denote $V=V(2)$ and let $\calf(V)$ denote the free binary operad generated by $V$. It can be intuitively expressed as the space of binary trees with the leafs permuted by $S_n$ for the arity $n$. Thus an arbitrary binary operad is a quotient 
$$\calp(V,R):=\calf(V)/(R)$$
for some $S_2$-module $V$ and an operadic ideal $(R)$ of $\calf(V)$ generated by a subset $R\subseteq \calf(V)$. 
$V$ is called the {\bf space of generators} and $R$ is called the {\bf space of relators}. If all elements of $R$ are quadratic, namely linear combinations of the form $\mu\circ_i \nu$ for $\mu, \nu\in V, i=1,2$, then the operad $\calf(V,R)$ is called {\bf binary quadratic}. 

\nc{\rmi}{\mathrm{I}}
\nc{\rmii}{\mathrm{II}}
\nc{\rmiii}{\mathrm{III}}

To determine the possible quadratic relators for $V$, we note that 
\begin{equation}
\calf(V)(3)\cong 3V\ot V=(V\ot V)\oplus (V\ot V)\oplus (V\ot V)
=(V\circ_\rmi V) \oplus (V\circ_\rmii V) \oplus (V\circ_\rmiii V).
\mlabel{eq:quadrel}
\end{equation}
Here for $\mu, \nu\in V$, we follow the notations in  \cite[Sec 7.6.2]{LV}:
\begin{align}
(\mu\circ_\rmi \nu)(x,y,z)&:=\mu(\nu(x,y),z), \mlabel{eq:comp1}\\
(\mu\circ_\rmii \nu)(x,y,z)&:=\mu(\nu(y,z),x), \mlabel{eq:comp2}\\
(\mu\circ_\rmiii \nu)(x,y,z)&:=\mu(\nu(z,x),y).
\mlabel{eq:comp3}
\end{align}

In the special case when $V=\bfk \{\mu,\mu'\}$, the notations can be further simplified to match with the classical multiplications, as in \cite[p. 247]{LV}. Let $\sigma=(12)\in S_2$ be the unique nonunit permutation, and denote $\mu'=\mu^\sigma$, the opposite product of $\mu$.
Then we have the following table, where $\mu(x,y)$ is abbreviated to $xy$. 
\begin{equation}
\begin{tabular}{rlrlrl}\hline
1 & $\mu\circ_\rmi \mu \leftrightarrow (xy)z$ &\quad 5& $\mu\circ_\rmiii\mu \leftrightarrow (zx)y$ &\quad  9 & $\mu\circ_\rmii \mu \leftrightarrow (yz)x$ 	\\
2 & $\mu'\circ_\rmii \mu \leftrightarrow x(yz)$ &\quad 6& $\mu'\circ_\rmi\mu \leftrightarrow z(xy)$ &\quad  10 & $\mu'\circ_\rmiii \mu \leftrightarrow y(zx)$\\
3 & $\mu'\circ_\rmii \mu' \leftrightarrow x(zy)$ &\quad 7& $\mu'\circ_\rmi\mu' \leftrightarrow z(yx)$ &\quad  11 & $\mu'\circ_\rmiii \mu' \leftrightarrow y(xz)$\\
4 & $\mu\circ_\rmiii \mu' \leftrightarrow (xz)y$ &\quad 8& $\mu\circ_\rmii\mu' \leftrightarrow (zy)x$ &\quad  12 & $\mu\circ_\rmi \mu' \leftrightarrow (yx)z$
\\	\hline
\end{tabular}
\mlabel{eq:bqtable}
\end{equation}

\subsection{The \mcompst of an operated operad}

Recall that a {\bf unary-binary operad} (also called an {\bf operated binary operad}) is an operad with only unary and binary operations~\mcite{ZGG}. More precisely, it is a quotient of the free operad $\calf(W)$ where the $\mathbb{S}$-module $W=W_1\oplus W_2$ is concentrated in degrees one and two, with $W_1$ usually larger than $\bfk \id$. So the operad might have non-trivial unary operations (linear operators). A unary-binary operad is of the form 
$$\calq:=\calp(W,S)=\calf(W)/(S),$$ 
where $(S)$ is the operadic ideal generated by a subset $S\subseteq\calf(W)$.
Let $\Omega$ and $\Theta$ be linear bases of $W_1$ and $W_2$ respectively. Since $W_2$ is an $S_2$-module, for any $\theta\in \Theta$, $\theta':=\theta^{(12)}$ is also in $\Theta$.

Define the $\mathbb{S}$-module concentrated at degree two: 
$$\genpre:=\genpre_{\Omega,\Theta}=\bfk\{\prec_{\omega,\theta},\succ_{\omega,\theta}\,|\,\omega\in \Omega, \theta\in \Theta\},$$
where the $S_2$-action on $\genpre$ is given by
$$\prec_{\omega,\theta}^{(12)}:=\succ_{\omega,\theta^{(12)}}=\succ_{\omega,\theta'} ,\quad \succ_{\omega,\theta}^{(12)}:=\prec_{\omega,\theta^{(12)}}=\prec_{\omega,\theta'},\quad
\omega\in\Omega,\theta\in\Theta.$$
Define an operad homomorphism, called the {\bf \precomp homomorphism}  
\begin{equation} \mlabel{eq:geninddir}
\morpre:=\morpre_{\calq}: \calf(\genpre)\to \calq, \quad
\left\{\begin{array}{l} \prec_{\omega,\theta}\mapsto\theta(\id\ot \omega) , \\ \succ_{\omega,\theta}\mapsto\theta(\omega\ot \id) ,
\end{array}\right. \theta\in \Theta, \omega\in \Omega.
\end{equation}
Denote 
\begin{equation} \mlabel{eq:genindreldi}
\relpre:=\relpre{}_\calq:=\ker \morpre \cap \calf(\genpre)(3),
\end{equation}
where $\calf(\genpre)(3)=3\,\Big(\genpre^{\ot 2}\Big)$ with the abbreviation in Eq.~\meqref{eq:quadrel}.
\begin{defi} \mlabel{de:preind}
Let $\calq=\calp(W,S)$ be a unary-binary operad. 
The {\bf \precomp binary quadratic operad} or simply the {\bf \precomp operad} 
of $\calq$ is defined to be  the binary quadratic operad given by the quotient \begin{equation}\mlabel{eq:genindquotdi} \oppre:=\calf(\genpre)/(\relpre).
\end{equation}
The $\oppre$-algebra is called the {\bf \precompst} of the $\calq$-algebra.
\end{defi}
The related operads satisfy the following commutative diagram.
$$\xymatrix{
\calf(\genpre)\ar@{->}[r]\ar@{->}[dr]^{\morpre}\ar@{->>}[d]&\calf(W)\ar@{->>}[d]\\
\oppre:=\calf(\genpre)/(\relpre)\ar@{->}[r]&\calf(W)/(S)=:\calq
}$$

\begin{remark}
\mlabel{r:indall}
By definition, $\relpre$ contains all the relators that can be induced from the $\calq$-algebras per Eq.\,\meqref{eq:geninddir}.
\end{remark}

Similarly, define the $\mathbb{S}$-module concentrated at degree two: 
$$\genpost:=\genpost_{\Omega,\Theta}=\bfk\{\prec_{\omega,\theta},\succ_{\omega,\theta}, \obb_{\theta}\,|\,\omega\in \Omega, \theta\in \Theta\},$$
where the $S_2$-action on $\genpre$ is given by
$$\prec_{\omega,\theta}^{(12)}:=\succ_{\omega,\theta^{(12)}}=\succ_{\omega,\theta'} ,\quad \succ_{\omega,\theta}^{(12)}:=\prec_{\omega,\theta^{(12)}}=\prec_{\omega,\theta'},\quad
\obb_{\theta}^{(12)}:=\obb_{\theta^{(12)}}=\obb_{\theta'},\quad
\omega\in\Omega,\theta\in\Theta.$$
Next, define the operad homomorphism, called the {\bf \postcomp homomorphism}  
\begin{equation} \mlabel{eq:genindtri}
\morpost:=\morpost_{\calq}: \calf(\genpost)\to \calq, \quad
\left\{\begin{array}{l} \prec_{\omega,\theta}\mapsto\theta(\id\ot \omega) , \\ \succ_{\omega,\theta}\mapsto \theta(\omega\ot \id),\\
\obb_{\theta}\mapsto \theta,
\end{array}\right. \theta\in \Theta, \omega\in \Omega.
\end{equation}

Denote 
\begin{equation} \mlabel{eq:genindreltri}
\relpost:=\relpost{}_\calq:=\ker \morpost \cap \calf(\genpost)(3),
\end{equation}
where $\calf(\genpost)(3)=3\,\Big(\genpost^{\ot 2}\Big)$ with the abbreviation in Eq.~\meqref{eq:quadrel}.

\begin{defi} \mlabel{de:postind}
The {\bf \postcomp binary quadratic operad} or simply the {\bf \postcomp operad} of the unary-binary operad $\calq=\calp(W,S)$ is defined to be the binary quadratic operad given by the quotient 
\begin{equation}\mlabel{eq:genindquottri} \oppost:=\calf(\genpost)/(\relpost).
\end{equation}
The $\oppost$-algebra is called the {\bf \postcompst} of the $\calq$-algebra.
\end{defi}

The \precomp and \postcomp operads satisfy the following commutative diagram.
$$\xymatrix{
\calf(\genpre)\ar@{->}[r]\ar@{->}[dr]^{\morpre}\ar@{->>}[d]&\calf(W)\ar@{->>}[d]& \calf(\genpost)\ar@{->}[l]\ar@{->}[dl]_{\morpost}\ar@{->>}[d]\\
\oppre:=\calf(\genpre)/(\relpre)\ar@{->}[r]&\calq=\calf(W)/(S)& \oppost:=\calf(\genpost)/(\relpost)\ar@{->}[l]\ar@/^2pc/[ll]^{0\leftarrow \obb_{\theta}}
}$$

\begin{rmk} \mlabel{rk:compstr}
\begin{enumerate}
\item 
Formally, the \precompst can be regarded as the \postcompst when the multiplications $\obb_\theta$ are taken to be zero. But the two structures can be quite different. 
\item
The case of nonsymmetric operads has been formulated and applied to the case of extended Rota-Baxter algebras in~\mcite{ZHG}. 
\item 
The \precomp and \postcomp binary quadratic operads hereby introduced should be regarded as the first stage of a more in-depth study of the induced structures of algebraic structures equipped with linear operators. One of the directions to develop is to study cubic and higher order relations that can be derived from a unary-binary operad. Another direction is to study induced structures from higher arity operads equipped with nontrivial unary operations, or more generally from operated operads.
\end{enumerate}
\end{rmk}

\subsection{Various multi-differential algebras and their operads}
\mlabel{ss:mdas}

We first recall the following notions from~\mcite{Gop,Gub}
\begin{defi}
\label{de:mapset}
An {\bf operated $\bfk$-algebra} is a $\bfk$-algebra $R$ together with a $\bfk$-linear map $P:R\to R$.  
\end{defi}

We also recall multi-differential commutative algebras~\mcite{Ko,PS} and their noncommuting analog. 

\begin{defi}
Let $\Omega$ be a nonempty set. Denote $\partial_\Omega:= \{\partial_\omega|\omega \in \Omega\}$.
\begin{enumerate} 
\item A \textbf{\cmdca} indexed by $\Omega$ is a commutative associative algebra $R$ equipped with derivations  $\partial_\Omega$ which are pairwise commuting. 
\item A \textbf{\ncmdca} indexed by $\Omega$ is a commutative associative algebra $R$ equipped with derivations $\partial_\Omega$ which are not necessarily pairwise commuting.  	
\end{enumerate}
\mlabel{d:mdas}
\end{defi}

For later applications, we also recall the construction of free objects for these multi-differential algebras \cite{WGZ1}.

For a set $Y$, let $M(Y)$ denote the free commutative monoid generated by $Y$, realized as the set of maps with finite supports:
\begin{equation}
M(Y):=\big\{\alpha:Y\to \N\,\big|\, |\supp(\alpha)|<\infty\big\}.
\end{equation}
Here the support of $\alpha$ is $\supp(\alpha):=\{y\in Y\,|\,\alpha(y)\neq 0\}$. 
The multiplication in $M(Y)$ is given by 
$$ (\alpha\cdot \beta)(y):=\alpha(y)+\beta(y), \quad y\in Y.$$
For convenience, $\alpha\in M(Y)$ is identified with $$\prod_{y\in \supp(\alpha)}y^{\alpha(y)}=y_1^{\alpha(y_1)}\cdots y_k^{\alpha(y_k)}$$ 
for $\supp(\alpha)=\{y_1,\ldots,y_k\}$, giving the usual construction of the free commutative monoid: 
\begin{equation}
M(Y):=\big\{ y_1^{\alpha_1}\cdots y_m^{\alpha_m}\,\big|\, y_i\in Y, \alpha_i\geq 1,m\geq 1\big\} \cup \{1\}.
\end{equation}

On the other hand, let $M_{\NC}(Y)$ be the free monoid on $Y$ realized as words:
\begin{equation}
M_{\NC}(Y):=\big\{y_1\cdots y_k\,\big|\, y_i\in Y, 1\leq i\leq k, k\geq 1\big\}\cup \{1\}
\mlabel{eq:ncmonoid}
\end{equation}
with the concatenation multiplication. 

Now for nonempty sets $\Omega$ and $X$, define the set $d_{\Omega}:=\{d_\omega\,|\,\omega\in \Omega\}$ and define the set of {\bf commuting multi-differential variables} to be 
\begin{equation}
\Delta_{\Omega}(X):=M(d_\Omega)\times X=\bigg\{\Big( \prod_{\omega\in \supp(\alpha)}d_\omega^{\alpha(\omega)},x\Big)~\bigg|~\alpha\in M(d_\Omega), x\in X\bigg\}.
\mlabel{eq:mdiffvar}
\end{equation}
As abbreviations, we write 
$$ \prod_{\omega\in \supp(\alpha)}d_\omega^{\alpha(\omega)}(x)=\Big(\prod_{\omega\in \supp(\alpha)}d_\omega^{\alpha(\omega)}\Big)(x)=\Big( \prod_{\omega\in \supp(\alpha)}d_\omega^{\alpha(\omega)},x\Big).$$

For $\omega\in \Omega$, define 
$$D_{\omega}: \Delta_{\Omega}(X)\ra\Delta_{\Omega}(X)$$ 
by 
$$D_\omega \Big(\prod_{\tau\in \supp(\alpha)}d_\tau^{\alpha(\tau)}(x)\Big):=d_\omega\prod_{\tau\in \supp(\alpha)}d_{\tau}^{\alpha(\tau)}(x)=d_{\omega}^{\alpha(\omega)+1}\prod_{\tau\in\supp(\alpha),\tau\neq\omega}d_{\tau}^{\alpha(\tau)}(x).$$ 
Take the polynomial algebra 
\begin{equation}
\bfk_\Omega\{X\}:=\bfk[\Delta_{\Omega}(X)]:=\bfk M(\Delta_{\Omega}(X)).
\end{equation}
Extend $D_\omega$ by applying the Leibniz rule repeatedly and the linearity to $D_\omega:\bfk_\Omega\{X\} \ra \bfk_\Omega\{X\}$. Let $i:X\ra \bfk_\Omega\{X\}, x\mapsto (1,x)$ be the natural injection. Then a fundamental result on differential algebra is 
\begin{thm} 
\mcite{Ko,PS}
The pair 	$(\bfk_\Omega\{X\}, \{D_\omega\}_{\omega\in\Omega})$ defined as above with $i:X\ra \bfk_\Omega\{X\}$ is the free \cmdca on $X$. 
\mlabel{t:fcmdca}
\end{thm}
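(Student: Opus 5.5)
To prove Theorem~\mref{t:fcmdca}, we first check that $(\bfk_\Omega\{X\},\{D_\omega\}_{\omega\in\Omega})$ is a \cmdca and then verify the universal property. Since $\bfk_\Omega\{X\}=\bfk M(\Delta_\Omega(X))$ is a polynomial algebra, it is commutative and associative. Each $D_\omega$ is defined on the variables $\Delta_\Omega(X)$ and extended by the Leibniz rule and linearity, so it is the unique derivation with the prescribed values on the generators. (A derivation of a polynomial algebra is freely determined by arbitrary values on the variables.)

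For commutativity of the derivations, note that the commutator $[D_\omega,D_\tau]$ of two derivations is again a derivation. Hence it suffices to check that it vanishes on the generators $\Delta_\Omega(X)$. On a variable $\prod d_\rho^{\alpha(\rho)}(x)$, both $D_\omega D_\tau$ and $D_\tau D_\omega$ produce the variable indexed by $\alpha\cdot d_\omega d_\tau$, because the monoid $M(d_\Omega)$ is commutative. So $[D_\omega,D_\tau]=0$.

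For the universal property, let $(R,\{\partial_\omega\})$ be a \cmdca and $f:X\to R$ any map. We define $\bar f$ on the variables by
$$\bar f\Big(\prod_{\omega\in\supp(\alpha)} d_\omega^{\alpha(\omega)}(x)\Big):=\Big(\prod_{\omega\in\supp(\alpha)}\partial_\omega^{\alpha(\omega)}\Big)(f(x)).$$
This is well defined precisely because the $\partial_\omega$ pairwise commute, so the composite depends only on the multiplicity function $\alpha$. We then extend $\bar f$ uniquely to an algebra homomorphism $\bfk M(\Delta_\Omega(X))\to R$ by the universal property of the polynomial algebra, and clearly $\bar f\circ i=f$.

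Next we show $\bar f D_\omega=\partial_\omega \bar f$. Both $\bar f D_\omega$ and $\partial_\omega\bar f$ are $\bar f$-derivations $\bfk_\Omega\{X\}\to R$, that is, linear maps $\delta$ satisfying $\delta(uv)=\delta(u)\bar f(v)+\bar f(u)\delta(v)$. Such maps are determined by their values on generators, so it suffices to compare them on variables. There the equality holds by construction, since
$$\bar f(D_\omega(\alpha,x))=\partial_\omega\prod\partial^{\alpha}(f(x))=\partial_\omega\bar f(\alpha,x).$$

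Uniqueness follows because any morphism $g$ of \cmdcas with $g\circ i=f$ must send $\prod d_\omega^{\alpha(\omega)}(x)=\prod D_\omega^{\alpha(\omega)}(i(x))$ to $\prod \partial_\omega^{\alpha(\omega)}(f(x))$. Since $g$ is an algebra homomorphism, it is then determined on all monomials.

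The main point requiring care is the well-definedness of $\bar f$ on variables together with the commutation of the $D_\omega$. Both reduce to the commutativity of $M(d_\Omega)$ matching the commutativity of the $\partial_\omega$. The rest is the standard fact that derivations and $\bar f$-derivations on a free commutative algebra are determined by their values on generators.
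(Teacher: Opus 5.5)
Your proof is correct. The paper gives no proof of this theorem and simply quotes it from Kolchin and van der Put--Singer; your argument is the standard one behind those references. In it, derivations and $\bar f$-derivations on the polynomial algebra $\bfk[\Delta_\Omega(X)]$ are determined by their values on the variables, and both $[D_\omega,D_\tau]=0$ and the well-definedness of $\bar f$ reduce to the commutativity of $M(d_\Omega)$ and of the $\partial_\omega$.
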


We also consider the case when the derivations are not commuting.
Define the set of {\bf noncommuting multi-differential variables} to be 
\begin{equation}
\ncmder(X):=M_{\NC}(d_\Omega)\times X
\mlabel{eq:ncmdiffvar}
\end{equation}
For $\omega\in \Omega$, define 
\begin{equation}
D_{\omega}: \ncmder(X) \ra \ncmder(X), \quad D_\omega \big((d_{\omega_1}\cdots d_{\omega_k})(x)\big):=(d_\omega d_{\omega_1}\cdots d_{\omega_k})(x).
\mlabel{eq:deronmulti}
\end{equation}
Again take the polynomial algebra 
$$\bfk_{{\rm NC},\Omega}\{X\}:=\bfk [\ncmder(X)]:=\bfk M(\ncmder(X))$$
as in Theorem~\mref{t:fcmdca}. Extend $D_\omega$ for $\omega\in \Omega$ by the Leibniz rule to a linear operator on $\bfk_{{\rm NC},\Omega}\{X\}$. Still let $i: X\ra \bfk_{{\rm NC},\Omega}\{X\}$ be the natural injection.

\begin{thm} \mcite{WGZ1}
The pair $(\bfk_{{\rm NC},\Omega}\{X\}, \{D_\omega\}_{\omega\in\Omega})$ with $i: X\ra \bfk_{\NC,\Omega}\{X\}$ is the free \ncmdca on $X$.
\mlabel{t:fncmdca}
\end{thm}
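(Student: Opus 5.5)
We prove Theorem~\mref{t:fncmdca} by checking directly that the construction is a noncommuting multi-differential commutative algebra and then verifying the universal property, exactly as in Theorem~\mref{t:fcmdca} but with words in place of commutative monomials.

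\emph{Step 1: the structure.} $\bfk_{\NC,\Omega}\{X\}$ is the polynomial algebra on the set $\ncmder(X)$, so it is commutative and associative. Each $D_\omega$ is defined on the variables $\ncmder(X)$ by Eq.~\meqref{eq:deronmulti}. There is a unique extension to a derivation of the polynomial algebra: set $D_\omega(1)=0$ and
$$D_\omega(u_1\cdots u_n):=\sum_{i=1}^n u_1\cdots D_\omega(u_i)\cdots u_n$$
for $u_i\in\ncmder(X)$, then extend linearly. This is well defined because the right-hand side is symmetric in the $u_i$. It satisfies the Leibniz rule on monomials, hence everywhere. No commutation among the $D_\omega$ is needed, so the pair is a noncommuting multi-differential commutative algebra.

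\emph{Step 2: existence of the lift.} Let $(A,\{\delta_\omega\}_{\omega\in\Omega})$ be a noncommuting multi-differential commutative algebra and $f:X\to A$ a map. Define $\bar f$ on variables by
$$\bar f\big((d_{\omega_1}\cdots d_{\omega_k})(x)\big):=\delta_{\omega_1}\cdots\delta_{\omega_k}(f(x)).$$
Since $M_{\NC}(d_\Omega)$ is free as a monoid, this map on $\ncmder(X)$ is well defined. By the universal property of the polynomial algebra $\bfk M(\ncmder(X))$, it extends uniquely to an algebra homomorphism $\bar f$. By construction $\bar f\circ i=f$.

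\emph{Step 3: compatibility with the derivations.} We must show $\bar f D_\omega=\delta_\omega\bar f$.
\begin{itemize}
\item On a variable, both sides give $\delta_\omega\delta_{\omega_1}\cdots\delta_{\omega_k}(f(x))$.
\item On a monomial $u_1\cdots u_n$, we induct on $n$. Both $\bar f D_\omega$ and $\delta_\omega\bar f$ are $\bar f$-derivations, i.e.\ they satisfy $\phi(ab)=\phi(a)\bar f(b)+\bar f(a)\phi(b)$. Two $\bar f$-derivations that agree on generators agree on products, hence everywhere by linearity.
\end{itemize}

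\emph{Step 4: uniqueness.} Any morphism $g$ with $g\circ i=f$ that commutes with the derivations is forced on variables to be $g\big((d_{\omega_1}\cdots d_{\omega_k})(x)\big)=\delta_{\omega_1}\cdots\delta_{\omega_k}(f(x))$, by induction on $k$. Since $g$ is an algebra homomorphism and the variables generate the algebra, $g=\bar f$.

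The only point needing care is the well-definedness in Steps 1 and 2. In the commuting case, the analogous step requires that the operators $\delta_\omega$ commute. Here the word structure of $M_{\NC}(d_\Omega)$ removes that requirement, so the main obstacle disappears, and the remaining verification is routine.
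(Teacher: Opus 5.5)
Your proof is correct. The key point is the one you isolate: a word $d_{\omega_1}\cdots d_{\omega_k}$ in the free monoid determines the composite $\delta_{\omega_1}\cdots\delta_{\omega_k}$ unambiguously, so defining $\bar f$ requires no commutation of the $\delta_\omega$. (The $n=0$ case of Step 3 also holds automatically, since any $\bar f$-derivation $\phi$ satisfies $\phi(1)=2\phi(1)=0$.)

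The paper gives no proof of this theorem and only quotes it from \cite{WGZ1}. So there is nothing in the paper to compare your route against; your direct check of the universal property, modeled on the classical commuting case, is the standard argument.
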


We now give an operadic interpretation of these multi-differential algebras. 
\begin{defi}
\begin{enumerate}
\item The {\bf operad of \ncmdcas} is 
\begin{equation}
\oncmdca:= \calp(W,\dfrel_{\oncmdca}), \mlabel{eq:oncmdca}
\end{equation}	
where 
\begin{equation} 
\dfrel_{\oncmdca}:=\big\{\mu (\mu\ot \id)-\mu(\id\ot \mu),	\mu-\mu^{(12)}, \rho_\omega \mu-\mu(\id\ot \rho_\omega)-\mu(\rho_\omega\ot \id) \,\big|\, \omega \in \Omega \big\}
\mlabel{eq:oncmdcarel}
\end{equation}	
\item The {\bf operad $\ocmdca$ of \cmdcas} is 
\small{			\begin{equation}
\ocmdca:= \calp(W,\dfrel_{\ocmdca}), \quad \dfrel_{\ocmdca}:=\dfrel_{\oncmdca}\cup \big\{ \rho_\alpha \rho_\beta - \rho_\beta \rho_\alpha\,\big|\, \alpha, \beta \in \Omega\big\}.
\mlabel{eq:ocmdca}
\end{equation}	
}
\end{enumerate}
\mlabel{df:operads}
\end{defi}

\section{The \precompst of commuting multi-differential commutative algebras} 
\mlabel{sec:diffcomp}
In this section, we show that the operad of multi-Novikov algebras\,\mcite{BrD} is the \precomp operad of the operad of commuting multi-differential commutative algebras.  
\begin{defi}
\mlabel{cmna}
\mcite{BrD} A \textbf{\cmna} is a vector space $\text{N}$ equipped with bilinear products $\rhd_\omega$ indexed by a set $\Omega$, which satisfy the following identities.
\begin{eqnarray*}
&(x\rhd_\omega y)\rhd_\tau z-x\rhd_\omega (y\rhd_\tau z)=(y\rhd_\omega x)\rhd_\tau z-y\rhd_\omega (x\rhd_\tau z),&\\
&(x\rhd_\omega y)\rhd_\tau z-x\rhd_\omega (y\rhd_\tau z)=(x\rhd_\tau y)\rhd_\omega z-x\rhd_\tau(y\rhd_\omega z),&\\
&(x\rhd_\omega y)\rhd_\tau z=(x\rhd_\tau z)\rhd_\omega y, \quad x, y, z\in \text{N}, \omega, \tau \in \Omega.&
\end{eqnarray*} 
\end{defi}
For the operads encoding multi-Novikov algebras, we have 
\begin{defi} Let $\Omega$ be a set and $V=\bfk\{\novop_\omega, \novop_\omega'\,|\, \omega\in \Omega\}$ be the $S_2$-module, regarded as an $\mathbb{S}$-module concentrated in degree 2, with basis $\{\novop_\omega,\novop_\omega'\,|\,\omega\in \Omega\}$ and $S_2$-action $\novop_\omega^{(12)}=\novop'$. Use the notations in Eq.~\meqref{eq:comp1} -- \meqref{eq:comp3} and define 
\begin{equation}		
{\dfrel}_{\omnov}:=\bfk S_3\left\{ \left.
\begin{array}{c}
\novop_\alpha\circ_\rmi \novop_\beta -\novop'_\alpha\circ_\rmii \novop_\beta-\novop_\alpha\circ_\rmi \novop'_\beta +\novop'_\alpha\circ_\rmiii \novop'_\beta\\
\novop_\alpha\circ_\rmi \novop_\beta -\novop'_\alpha\circ_\rmii \novop_\beta-\novop_\beta\circ_\rmi \novop_\alpha +\novop'_\beta\circ_\rmii \novop_\alpha \\
\novop_\alpha\circ_\rmi \novop_\beta - \novop'_\alpha\circ_\rmii\novop'_\beta
\end{array}\, \right | \alpha, \beta\in \Omega
\right \}.
\mlabel{eq:mnovrel}
\end{equation}
The quotient operad $\omnov:=\calp(V,\dfrel_{\omnov})$ is called the {\bf operad of multi-Novikov algebras}.
\mlabel{d:omnov}
\end{defi}
Then an $\omnov$-algebra is simply a multi-Novikov algebra.

We now show that the operad of \cmnas is precisely the \precomp operad of the operad of \cmdcas in the sense of Definition~\mref{de:preind}.

\begin{thm}
The operad $\omnov$ is the \precomp operad $\mpre(\ocmdca)$ of the operad $\ocmdca$ of multi-differential commutative algebras defined in  Eq.~\meqref{eq:ocmdca}. 
More precisely, let 
$$V:=\bfk\{\novop_\omega, \novop_\omega'\,|\, \omega\in \Omega\}$$ be the $\mathbb{S}$-module in Definition~\mref{d:omnov}. For the operadic homomorphism 
$$ \morpre: \calf(V) \longrightarrow \ocmdca, \quad	\novop_\omega\,\mapsto \mu(\id\ot \omega), \novop_\omega'\,\mapsto \mu'(\omega\ot \id), \quad  \omega\in \Omega,$$
we have $\ker \morpre \cap 3V^{\ot 2}= \dfrel_{\omnov}$.
\mlabel{thm:omnovfromcmdca}
\end{thm}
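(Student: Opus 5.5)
Since the relators $\dfrel_{\omnov}$ are stated only as the $S_3$-submodule they generate, the equality to prove is $\ker \morpre \cap 3V^{\ot 2} = \dfrel_{\omnov}$, read as an equality of subspaces of $\calf(V)(3)$; note $\dfrel_{\omnov}$ is defined as $\bfk S_3\{\cdots\}$, so it is already $S_3$-stable. We prove the two inclusions separately and, in both directions, move to concrete algebras. By Theorem~\mref{t:fcmdca}, the arity-$3$ component of $\ocmdca$ embeds into the multilinear part of the free \cmdca $\bfk_\Omega\{x,y,z\}$. Concretely, an element $f\in 3V^{\ot 2}$ lies in $\ker\morpre$ if and only if the corresponding expression, evaluated in $\bfk_\Omega\{x,y,z\}$ with $a\novop_\omega b:=a\,d_\omega(b)$, vanishes on the multilinear monomials in $x,y,z$.

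For the inclusion $\dfrel_{\omnov}\subseteq\ker\morpre$, it suffices by $S_3$-equivariance of $\morpre$ to check the three generating relators. This is a direct check. One has $(x\novop_\omega y)\novop_\tau z - x\novop_\omega(y\novop_\tau z) = -x\, y\, d_\omega d_\tau(z)$, which is symmetric in $x,y$ and, because $d_\omega d_\tau=d_\tau d_\omega$, also symmetric in $\omega,\tau$. Moreover $(x\novop_\omega y)\novop_\tau z = x\, d_\omega(y)\, d_\tau(z)$ is invariant under the simultaneous swap $(y,\omega)\leftrightarrow(z,\tau)$. Translating these three identities through table~\meqref{eq:bqtable} gives the three relators in Eq.~\meqref{eq:mnovrel}. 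This is the point where the commuting relations in $\dfrel_{\ocmdca}$ are used.

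For the reverse inclusion we use a dimension and normal-form argument. First, describe $3V^{\ot 2}$ explicitly. For each ordered pair $(\alpha,\beta)\in\Omega^2$ there are $12$ basis monomials, and each maps under $\morpre$ to a sum of monomials of the form $u\cdot d_\alpha(v)\cdot d_\beta(w)$ or $u\cdot v\cdot d_\alpha d_\beta(w)$, where $\{u,v,w\}=\{x,y,z\}$. So $\morpre$ respects the decomposition indexed by the unordered pair $\{\alpha,\beta\}$, and we may work one pair at a time. Within a pair, the image is spanned by two kinds of monomials. The first are the $3$ monomials $uv\,d_\alpha d_\beta(w)$, indexed by $w$. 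The second are the monomials $u\,d_\alpha(v)\,d_\beta(w)$, with $6$ of them when $\alpha\ne\beta$ and $3$ when $\alpha=\beta$. Computing the image of each of the $24$ (respectively $12$) basis monomials in terms of these $9$ (respectively $6$) target monomials gives an explicit matrix. Its rank is the dimension of the image, and therefore $\dim\ker\morpre|_{\{\alpha,\beta\}}$ equals $24-9=15$ when $\alpha\neq\beta$ and $12-6=6$ when $\alpha=\beta$. For the second count, the rank is $6$ because every target monomial is hit: for instance $x\novop_\alpha y$ composed suitably produces $u\,d_\alpha(v)\,d_\beta(w)$ by itself, and differences produce $uv\,d_\alpha d_\beta(w)$. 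The same surjectivity argument should give rank $9$ when $\alpha\ne\beta$.

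It then remains to show that the $S_3$-orbits of the relators in Eq.~\meqref{eq:mnovrel} with indices in $\{\alpha,\beta\}$ span a space of exactly this dimension, namely $15$ or $6$. The case $\alpha=\beta$ recovers the classical count for the Novikov operad. This rank computation is the main obstacle. I would handle it with a rewriting argument. Using the third relator, every monomial of type $(a\novop b)\novop c$ can be reduced to a normal form. Using the first two relators, every monomial $a\novop(b\novop c)$ can be expressed through $(\cdot\novop\cdot)\novop\cdot$ monomials. One then counts the normal forms that remain and checks that this count equals $9$ (respectively $6$). Such a count shows $\dim\big(3V^{\ot2}/\dfrel_{\omnov}\big)\le\dim\mim\morpre$, and together with the inclusion already proved this forces equality.
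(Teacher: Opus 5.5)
Your argument works, but it takes a different route from the paper.

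The paper's route is to write a general $r\in 3V^{\ot 2}$ with $24$ coefficients per ordered pair $(\omega,\tau)$ and evaluate $\Phi(r)$ in $\bfk_\Omega\{x,y,z\}$. It reads off nine linear equations from the independence of the differential monomials, solves them, and back-substitutes. This exhibits $r$ explicitly as a combination of fifteen kernel elements, which it then recognizes as $S_3$-translates of the identities in Definition~\mref{cmna}.

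You replace the solving and back-substitution by a dimension count. $\Phi$ is visibly onto the $9$ (resp.\ $6$) target monomials, so each $\{\alpha,\beta\}$-component of $\ker\Phi$ has dimension $15$ (resp.\ $6$). A spanning set of size $9$ (resp.\ $6$) for the quotient by $\dfrel_{\omnov}$ then gives the matching bound.

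Your version buys several things:
\begin{enumerate}
\item It is shorter and also computes $\dim\omnov(3)$ component by component.
\item It works with an honest basis, one unordered pair at a time. The paper sums over all ordered pairs, so, e.g., $a_{\tau,\omega,1}$ and $g_{\omega,\tau,1}$ both multiply $(x\rhd_\tau y)\rhd_\omega z$. A given differential monomial therefore collects contributions from both the $(\omega,\tau)$ and $(\tau,\omega)$ summands, which the paper's coefficient extraction glosses over.
\end{enumerate}
The paper's method, by contrast, needs no candidate relators in advance: it produces them. That is exactly how it is used in Section~\mref{s:ncdifcaind}.

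One sentence of your sketch needs fixing. Relators 1 and 2 do not express $a\rhd(b\rhd c)$ through $(\cdot\rhd\cdot)\rhd\cdot$ monomials alone; if they did, the quotient would have dimension at most $6$, contradicting rank $9$. What they say is that $A_{\gamma\delta}(u,v,w):=(u\rhd_\gamma v)\rhd_\delta w-u\rhd_\gamma(v\rhd_\delta w)$ is invariant under $u\leftrightarrow v$ and under $\gamma\leftrightarrow\delta$. So modulo $\dfrel_{\omnov}$ it depends only on $w$. Relator 3 identifies $(u\rhd_\beta v)\rhd_\alpha w$ with $(u\rhd_\alpha w)\rhd_\beta v$.

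Hence the quotient is spanned by the six classes $(u\rhd_\alpha v)\rhd_\beta w$ together with three classes $A(w)$ (at most $3+3$ classes when $\alpha=\beta$). These map to $u\,d_\alpha(v)d_\beta(w)$ and $-uv\,d_\alpha d_\beta(w)$ respectively. An upper bound of $9$ (resp.\ $6$) on the quotient is all you need, not equality.

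Finally, do not claim that translating through Table~\meqref{eq:bqtable} gives Eq.~\meqref{eq:mnovrel} literally. As printed, its third generator is $(x\rhd_\beta y)\rhd_\alpha z-x\rhd_\alpha(z\rhd_\beta y)$, whose image is $-xz\,d_\alpha d_\beta(y)\neq 0$. So $\dfrel_{\omnov}$ should be read as the $S_3$-span of the element identities of Definition~\mref{cmna}, as both you and the paper effectively do.
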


Specializing to the one operator case, we obtain
\begin{coro}
\mlabel{co:novind}
The operad of Novikov algebras is the \precomp operad of the operad of differential commutative algebra. 
\end{coro}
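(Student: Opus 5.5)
Both inclusions $\dfrel_{\omnov}\subseteq \ker\morpre\cap 3V^{\ot 2}$ and $\ker\morpre\cap 3V^{\ot 2}\subseteq\dfrel_{\omnov}$ reduce to computations in arity three. I will evaluate $\morpre$ through the free object. By Theorem~\mref{t:fcmdca}, $\ocmdca(3)$ embeds into the multilinear part of $\bfk_\Omega\{x,y,z\}$: the span of monomials linear in each of $x,y,z$. Concretely, an element of $\ocmdca(3)$ is determined by its value on generic $x,y,z$ in the free commuting multi-differential commutative algebra. Under this realization a basis element of $3V^{\ot 2}$, such as $\novop_\alpha\circ_\rmi\novop_\beta$, is sent to $(x\, D_\beta y)\, D_\alpha z$. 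Likewise $\novop'_\alpha\circ_\rmii\novop_\beta$ is sent to $D_\alpha(y D_\beta z)\, x = x\,D_\alpha y\,D_\beta z + x\,y\,D_\alpha D_\beta z$, and so on. The twelve types of table~\meqref{eq:bqtable}, each indexed by a pair $(\alpha,\beta)\in\Omega^2$, give the spanning set of $3V^{\ot 2}$.

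For the inclusion $\dfrel_{\omnov}\subseteq\ker\morpre$, it suffices, by $S_3$-equivariance of $\morpre$, to check the three generating relators for all $\alpha,\beta$. This is Gelfand-style bookkeeping. For the first relator, the terms $xy\,D_\alpha D_\beta z$ cancel. For the second, they cancel because $D_\alpha D_\beta=D_\beta D_\alpha$. The third is immediate from commutativity of the product.

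The converse is the main work. Every image is a linear combination of multilinear monomials of exactly two types. The first type has one variable differentiated twice: $u\,v\,D_\alpha D_\beta w$, which by commutativity of the derivations depends only on the unordered pair $\{\alpha,\beta\}$ and on the choice of $w$. The second type has two distinct variables each differentiated once: $u\,D_\alpha v\,D_\beta w$. Because these monomials are linearly independent in the free algebra (Theorem~\mref{t:fcmdca}), $\ker\morpre\cap 3V^{\ot 2}$ is the kernel of an explicit matrix. I will compute its dimension by comparing the rank of $\morpre$ on $3V^{\ot2}$ with $\dim$ of $3V^{\ot2}$ modulo the $S_3$-span of the relators. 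This can be done pair by pair, since $\morpre$ respects the decomposition by the multiset $\{\alpha,\beta\}$. For each pair $\alpha\ne\beta$ the relevant piece of $3V^{\ot2}$ is spanned by the $24$ elements of the twelve types with indices $(\alpha,\beta)$ or $(\beta,\alpha)$. For $\alpha=\beta$ it is the $12$ elements of the single-operator case.

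The step I expect to be the main obstacle is the rank count. I plan to handle it as follows. Modulo the $S_3$-span of the relators, the third (Novikov commutativity) relators reduce every element to a combination of a small family of normal forms, for instance the elements $x\rhd_\alpha(y\rhd_\beta z)$ and $(x\rhd_\alpha y)\rhd_\beta z$ with permuted variables. The first two relators then cut this family down further. I will then show that $\morpre$ is injective on the remaining normal forms by exhibiting, for each, a distinguishing monomial in its image; a triangularity argument with respect to the differentiated variables should suffice. This yields $\dim(3V^{\ot2}/\bfk S_3\dfrel_{\omnov})\le \operatorname{rank}\morpre$, and combined with the first inclusion gives equality of the two subspaces. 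The case $\alpha=\beta$ recovers Corollary~\mref{co:novind}: there the quotient should be $6$-dimensional, matching the $6$ independent multilinear monomials of the form $u\,D v\,D w$ and $u\,v\,D^2 w$ (three of each).
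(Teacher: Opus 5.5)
Your proposal is correct. For the substantive inclusion $\ker\morpre\cap 3V^{\ot 2}\subseteq\dfrel_{\omnov}$ it takes a genuinely different route from the paper.

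\textbf{The paper's route.} The paper gets the corollary by specializing Theorem~\mref{thm:omnovfromcmdca}. That proof writes a general element of $3V^{\ot 2}$ with undetermined coefficients and expands its image in $\bfk_\Omega\{x,y,z\}$ by the Leibniz rule. It then sets the coefficients of the differential monomials to zero, solves the linear system, and substitutes back. This exhibits every kernel element explicitly as a combination of $S_3$-translates of the Novikov relations.

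\textbf{Your route.} You replace this elimination by rank--nullity. The inclusion $\dfrel_{\omnov}\subseteq\ker\morpre$ gives $\operatorname{rank}\morpre\le\dim(3V^{\ot 2}/\dfrel_{\omnov})$. A spanning set of normal forms with triangular images gives the reverse inequality. In the one-operator case this closes at once:
\begin{itemize}
\item Right-commutativity leaves the classes of $(x\novop y)\novop z$, $(y\novop x)\novop z$, $(z\novop x)\novop y$.
\item Left-symmetry leaves one element $u\novop(v\novop w)$ for each $w$, say $x\novop(y\novop z)$, $x\novop(z\novop y)$, $y\novop(z\novop x)$.
\item The six images are independent. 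The last three contain the distinct monomials $xy\,D^2z$, $xz\,D^2y$, $yz\,D^2x$, while the first three map to $x\,Dy\,Dz$, $y\,Dx\,Dz$, $z\,Dx\,Dy$.
\end{itemize}
Hence $\dim(3V^{\ot 2}/\dfrel_{\omnov})\le 6\le\operatorname{rank}\morpre$.

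\textbf{What each route buys.} Your route avoids the bookkeeping of twelve (or twenty-four) unknowns. It also works with an honest basis of each finite-dimensional piece. By contrast, the paper's sum over ordered pairs $(\omega,\tau)$ lists each basis element twice, since the $g$--$l$ terms repeat the $a$--$f$ terms with $\omega,\tau$ swapped. Your route also explains the answer as a dimension match $6=6$. The paper's elimination, on the other hand, needs no candidate relator space in advance and produces kernel generators directly. That is exactly what Section~\mref{s:ncdifcaind} requires, since there the induced relations are discovered rather than verified.

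One caution for your first inclusion. Check the identities \meqref{eq:n1} and \meqref{eq:n2}, as you implicitly do, rather than the operadic expressions of Eq.\meqref{eq:mnovrel} read literally through Table\meqref{eq:bqtable}. For instance, $\novop\circ_\rmi\novop-\novop'\circ_\rmii\novop'$ encodes $(x\novop y)\novop z-x\novop(z\novop y)$, which $\morpre$ sends to $-xz\,D^2y\neq 0$.
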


\begin{remark}
\mlabel{r:novind}
As noted in Remark\,\mref{r:indall}, Theorem\,\mref{thm:omnovfromcmdca} (resp. Corollary\,\mref{co:novind}) shows that all the binary quadratic relations that can be induced from multi-differential commutative algebras (resp. differential commutative algebras) are generated by relations of multi-Novikov algebras (resp. Novikov algebras). This answers Question\,\mref{q:daconv} and also justifies the term multi-Novikov algebras.
\end{remark}

\begin{proof}[Proof of Theorem\,\mref{thm:omnovfromcmdca}]
First the inclusion	
$\dfrel_{\omnov}\subseteq  \ker \morpre \cap 3V^{\ot 2}$ is the content of~\cite[Proposition 2.1]{BrD}, stating that the three generators of $\dfrel_{\omnov}$ in Eq.~\meqref{eq:mnovrel} 
can be \comp by the relators $\dfrel_{\ocmdca}$ of the operad $\ocmdca$ of multi-differential commutative algebras. That is, for any $r\in \dfrel_{\ocmdca}$, we have $\Phi(r)=0$.  

We just need to show the inverse inclusion $\ker \morpre \cap 3V^{\ot 2}\subseteq  \dfrel_{\omnov}$. 
Applying Table \meqref{eq:bqtable},  each relator $r(x\otimes y\otimes z)$ encoded by $r\in3 V^{\ot 2}$ is a $\bfk$-linear combination of the form
\begin{align}
\begin{split}	r(x\otimes y\otimes z)&\\
	:=	\sum_{\omega, \tau\in \Omega}  \Big( & a_{\omega,\tau,1}(x\rhd_\omega y)\rhd_\tau z+a_{\omega,\tau,2}x\rhd_\omega(y\rhd_\tau z)+b_{\omega,\tau,1}(y\rhd_\omega x)\rhd_\tau z+b_{\omega,\tau,2}y\rhd_\omega(x\rhd_\tau z)\\
+&c_{\omega,\tau,1}(y\rhd_\omega z)\rhd_\tau x+c_{\omega,\tau,2}y\rhd_\omega(z\rhd_\tau x)+d_{\omega,\tau,1}(z\rhd_\omega y)\rhd_\tau x+d_{\omega,\tau,2}z\rhd_\omega(y\rhd_\tau x)\\
+&e_{\omega,\tau,1}(x\rhd_\omega z)\rhd_\tau y+e_{\omega,\tau,2}x\rhd_\omega(z\rhd_\tau y)+f_{\omega,\tau,1}(z\rhd_\omega x)\rhd_\tau y+f_{\omega,\tau,2}z\rhd_\omega(x\rhd_\tau y)\\
+&g_{\omega,\tau,1}(x\rhd_\tau y)\rhd_\omega z+ g_{\omega,\tau,2}x\rhd_\tau(y\rhd_\omega z)+h_{\omega,\tau,1}(y\rhd_\tau x)\rhd_\omega z+h_{\omega,\tau,2}y\rhd_\tau(x\rhd_\omega z)\\
+&i_{\omega,\tau,1}(y\rhd_\tau z)\rhd_\omega x+i_{\omega,\tau,2}y\rhd_\tau(z\rhd_\omega x)+j_{\omega,\tau,1}(z\rhd_\tau y)\rhd_\omega x+j_{\omega,\tau,2}z\rhd_\tau(y\rhd_\omega x)\\
+&k_{\omega,\tau,1}(x\rhd_\tau z)\rhd_\omega y+k_{\omega,\tau,2}x\rhd_\tau(z\rhd_\omega y)+l_{\omega,\tau,1}(z\rhd_\tau x)\rhd_\omega y+l_{\omega,\tau,2}z\rhd_\tau(x\rhd_\omega y)\Big).
\end{split}
\label{eq:relelt}
\end{align}

If $r$ is in $\ker \morpre \cap 3V^{\ot 2}$, then $\Phi(r)=0$.
Thus, for every \cmdca $(A,d_\Omega)$, 
the relation $\Phi(r)(x\otimes y\otimes z)=0$ encoded by $\Phi(r)=0\in \ocmdca$ holds in $(A,d_\Omega)$.
This is the case in particular for the free \cmdcas. 

Let $(A, D_\Omega)$ be the free \cmdca $A=\bfk_\Omega\{x,y,z\}$ in Lemma~\mref{t:fcmdca}, with generators $\{x,y,z\}$, the commutative product $m$ and the set of commuting derivations $D_\Omega=\{D_\omega|\omega\in \Omega\}$. 
The \comp binary products $
\rhd_\omega, \omega\in \Omega$ on $A$ are defined by
$$x\rhd_\omega y:=\Phi(\rhd_\omega)(x\otimes y)=m(\id\otimes D_\omega)(x\otimes y)=xD_\omega (y), \quad  \omega\in \Omega.$$
Substituting these products into $r(x\otimes y\otimes z)$ in Eq.~\eqref{eq:relelt}, we obtain $\Phi(r)(x\otimes y\otimes z)$. Then, applying $\Phi(r)(x\otimes y\otimes z)=0$ together with the Leibniz rule,  we have
\begin{align*}
\Phi(r)(x\otimes y\otimes z)=&~\sum_{\omega, \tau\in \Omega}  \Big(	a_{\omega,\tau,1}x D_\omega(y) D_\tau(z)+a_{\omega,\tau,2}x D_\omega(y) D_\tau(z)+a_{\omega,\tau,2}xy D_\omega D_\tau(z)\\
&~+b_{\omega,\tau,1}y D_\omega(x) D_\tau(z)+b_{\omega,\tau,2}y D_\omega(x) D_\tau(z)+b_{\omega,\tau,2}yx D_\omega D_\tau(z)\\
&~+c_{\omega,\tau,1}y D_\omega(z) D_\tau(x)+c_{\omega,\tau,2}y D_\omega(z) D_\tau(x)+c_{\omega,\tau,2}yz D_\omega D_\tau(x)\\
&~+d_{\omega,\tau,1}z D_\omega(y) D_\tau(x)+d_{\omega,\tau,2}z D_\omega(y) D_\tau(x)+d_{\omega,\tau,2}zy D_\omega D_\tau(x)\\
&~+e_{\omega,\tau,1}x D_\omega(z) D_\tau(y)+e_{\omega,\tau,2}x D_\omega(z) D_\tau(y)+e_{\omega,\tau,2}xz D_\omega D_\tau(y)\\
&~+f_{\omega,\tau,1}z D_\omega(x) D_\tau(y)+f_{\omega,\tau,2}z D_\omega(x) D_\tau(y)+f_{\omega,\tau,2}zx D_\omega D_\tau(y)\\
&~+g_{\omega,\tau,1}x D_\tau(y) D_\omega(z)+g_{\omega,\tau,2}x D_\tau(y) D_\omega(z)+g_{\omega,\tau,2}xy D_\tau D_\omega(z)\\
&~+h_{\omega,\tau,1}y D_\tau(x) D_\omega(z)+h_{\omega,\tau,2}y D_\tau(x) D_\omega(z)+h_{\omega,\tau,2}yx D_\tau D_\omega(z)\\
&~+i_{\omega,\tau,1}y D_\tau(z) D_\omega(x)+i_{\omega,\tau,2}y D_\tau(z) D_\omega(x)+i_{\omega,\tau,2}yz D_\tau D_\omega(x)\\
&~+j_{\omega,\tau,1}z D_\tau(y) D_\omega(x)+j_{\omega,\tau,2}z D_\tau(y) D_\omega(x)+j_{\omega,\tau,2}zy D_\tau D_\omega(x)\\
&~+k_{\omega,\tau,1}x D_\tau(z) D_\omega(y)+k_{\omega,\tau,2}x D_\tau(z) D_\omega(y)+k_{\omega,\tau,2}xz D_\tau D_\omega(y)\\
&~+l_{\omega,\tau,1}z D_\tau(x) D_\omega(y)+l_{\omega,\tau,2}z D_\tau(x) D_\omega(y)+l_{\omega,\tau,2}zx D_\tau D_\omega(y)\Big)\\
=&~0.
\end{align*}
We then combine like terms of the differential monomials to obtain
\begin{align*}
\Phi(r)(x\otimes y\otimes z)=&~\sum_{\omega, \tau\in \Omega}  \Big(	(a_{\omega,\tau,1}+a_{\omega,\tau,2}+k_{\omega,\tau,1}+k_{\omega,\tau,2})x D_\omega(y) D_\tau(z)\\
&~+(b_{\omega,\tau,1}+b_{\omega,\tau,2}+i_{\omega,\tau,1}+i_{\omega,\tau,2})y D_\omega(x) D_\tau(z)\\
&~+(c_{\omega,\tau,1}+c_{\omega,\tau,2}+h_{\omega,\tau,1}+h_{\omega,\tau,2})y D_\omega(z) D_\tau(x)\\
&~+(d_{\omega,\tau,1}+d_{\omega,\tau,2}+l_{\omega,\tau,1}+l_{\omega,\tau,2})    z D_\omega(y) D_\tau(x)\\
&~+(e_{\omega,\tau,1}+e_{\omega,\tau,2}+g_{\omega,\tau,1}+g_{\omega,\tau,2})x D_\omega(z) D_\tau(y)\\
&~+(f_{\omega,\tau,1}+f_{\omega,\tau,2}+j_{\omega,\tau,1}+j_{\omega,\tau,2})z D_\omega(x) D_\tau(y)\\
&~+(a_{\omega,\tau,2}+b_{\omega,\tau,2}+g_{\omega,\tau,2}+h_{\omega,\tau,2})xy D_\omega D_\tau(z)\\
&~+(c_{\omega,\tau,2}+d_{\omega,\tau,2}+i_{\omega,\tau,2}+j_{\omega,\tau,2})yz D_\omega D_\tau(x)\\
&~+(e_{\omega,\tau,2}+f_{\omega,\tau,2}+k_{\omega,\tau,2}+l_{\omega,\tau,2})xz D_\omega D_\tau(y)\Big) \\
=&~0.
\end{align*}
By Theorem~\mref{t:fcmdca}, the differential monomials appearing in the above expression are part of a linear basis in the free commuting multi-differential commutative algebra generated on $\{x,y,z\}$, and hence are linearly independent. Thus the coefficients themselves are zero, giving the equations
\begin{align*}
&a_{\omega,\tau,1}+a_{\omega,\tau,2}+k_{\omega,\tau,1}+k_{\omega,\tau,2}=0,\\
&b_{\omega,\tau,1}+b_{\omega,\tau,2}+i_{\omega,\tau,1}+i_{\omega,\tau,2}=0,\\
&c_{\omega,\tau,1}+c_{\omega,\tau,2}+h_{\omega,\tau,1}+h_{\omega,\tau,2}=0,\\
&d_{\omega,\tau,1}+d_{\omega,\tau,2}+l_{\omega,\tau,1}+l_{\omega,\tau,2}=0,\\
&e_{\omega,\tau,1}+e_{\omega,\tau,2}+g_{\omega,\tau,1}+g_{\omega,\tau,2}=0,\\
&f_{\omega,\tau,1}+f_{\omega,\tau,2}+j_{\omega,\tau,1}+j_{\omega,\tau,2}=0,\\
&a_{\omega,\tau,2}+b_{\omega,\tau,2}+g_{\omega,\tau,2}+h_{\omega,\tau,2}=0,\\
&c_{\omega,\tau,2}+d_{\omega,\tau,2}+i_{\omega,\tau,2}+j_{\omega,\tau,2}=0,\\
&e_{\omega,\tau,2}+f_{\omega,\tau,2}+k_{\omega,\tau,2}+l_{\omega,\tau,2}=0\quad \omega, \tau\in \Omega.
\end{align*}
Solving these equations, we obtain the following substitution relations
\begin{align}
&	\mlabel{eq 53}
a_{\omega,\tau,1}=-a_{\omega,\tau,2}-k_{\omega,\tau,1}-k_{\omega,\tau,2},\\
&	\mlabel{eq 54}
b_{\omega,\tau,1}=-b_{\omega,\tau,2}-i_{\omega,\tau,1}-i_{\omega,\tau,2},\\
&\mlabel{eq 55}
c_{\omega,\tau,1}=-c_{\omega,\tau,2}-h_{\omega,\tau,1}-h_{\omega,\tau,2},\\
&\mlabel{eq 56}
d_{\omega,\tau,1}=-d_{\omega,\tau,2}-l_{\omega,\tau,1}-l_{\omega,\tau,2},\\
&\mlabel{eq 57}
e_{\omega,\tau,1}=-e_{\omega,\tau,2}-g_{\omega,\tau,1}-g_{\omega,\tau,2},\\
&\mlabel{eq 58}
f_{\omega,\tau,1}=-f_{\omega,\tau,2}-j_{\omega,\tau,1}-j_{\omega,\tau,2},\\
&\mlabel{eq 59}
a_{\omega,\tau,2}=-b_{\omega,\tau,2}-g_{\omega,\tau,2}-h_{\omega,\tau,2},\\
&\mlabel{eq 60}
c_{\omega,\tau,2}=-d_{\omega,\tau,2}-i_{\omega,\tau,2}-j_{\omega,\tau,2},\\
&\mlabel{eq 61}
e_{\omega,\tau,2}=-f_{\omega,\tau,2}-k_{\omega,\tau,2}-l_{\omega,\tau,2}, \quad \omega,\tau\in \Omega.
\end{align}

Plugging Eqs.~\meqref{eq 59}, \meqref{eq 60}, and \meqref{eq 61} into Eqs.~\meqref{eq 53}, ~\meqref{eq 55}, and ~\meqref{eq 57} respectively, we get
\begin{eqnarray}\mlabel{eq 62}
&a_{\omega,\tau,1}=b_{\omega,\tau,2}+g_{\omega,\tau,2}+h_{\omega,\tau,2}-k_{\omega,\tau,1}-k_{\omega,\tau,2},&\\
\mlabel{eq 63}
&c_{\omega,\tau,1}=d_{\omega,\tau,2}+i_{\omega,\tau,2}+j_{\omega,\tau,2}-h_{\omega,\tau,1}-h_{\omega,\tau,2},&\\
\mlabel{eq 64}
&e_{\omega,\tau,1}=f_{\omega,\tau,2}+k_{\omega,\tau,2}+l_{\omega,\tau,2}-g_{\omega,\tau,1}-g_{\omega,\tau,2}.&
\end{eqnarray}
Now substituting Eqs.~\meqref{eq 54}, \meqref{eq 56} and \meqref{eq 58} -- \meqref{eq 64} into the original Eq.\,\meqref{eq:relelt}, we see that the desired relation $r(x\otimes y\otimes z)$ has the form
\begin{align*}
&r(x\otimes y\otimes z)\\
=& \sum_{\omega, \tau\in \Omega}  \bigg(	\Big(b_{\omega,\tau,1}+g_{\omega,\tau,2}+h_{\omega,\tau,2}-k_{\omega,\tau,1}-k_{\omega,\tau,2}\Big)(x\rhd_\omega y)\rhd_\tau z+\Big(-b_{\omega,\tau,2}-g_{\omega,\tau,2}-h_{\omega,\tau,2}\Big)x\rhd_\omega(y\rhd_\tau z)\\
&~+\Big(-b_{\omega,\tau,2}-i_{\omega,\tau,1}-i_{\omega,\tau,2}\Big)(y\rhd_\omega x)\rhd_\tau z+b_{\omega,\tau,2}y\rhd_\omega(x\rhd_\tau z)\\
&~+\Big(d_{\omega,\tau,2}+i_{\omega,\tau,2}+j_{\omega,\tau,2}-h_{\omega,\tau,1}-h_{\omega,\tau,2}\Big)(y\rhd_\omega z)\rhd_\tau x+\Big(-d_{\omega,\tau,2}-i_{\omega,\tau,2}-j_{\omega,\tau,2}\Big)y\rhd_\omega(z\rhd_\tau x)\\
&~+\Big(-d_{\omega,\tau,2}-l_{\omega,\tau,1}-l_{\omega,\tau,2}\Big)(z\rhd_\omega y)\rhd_\tau x+d_{\omega,\tau,2}z\rhd_\omega(y\rhd_\tau x)\\
&~+\Big(f_{\omega,\tau,2}+k_{\omega,\tau,2}+l_{\omega,\tau,2}-g_{\omega,\tau,1}-g_{\omega,\tau,2}\Big)(x\rhd_\omega z)\rhd_\tau y+\Big(-f_{\omega,\tau,2}-k_{\omega,\tau,2}-l_{\omega,\tau,2}\Big)x\rhd_\omega(z\rhd_\tau y)\\
&~+\Big(-f_{\omega,\tau,2}-j_{\omega,\tau,1}-j_{\omega,\tau,2}\Big)(z\rhd_\omega x)\rhd_\tau y+f_{\omega,\tau,2}(z\rhd_\omega(x\rhd_\tau y)\\
&~+g_{\omega,\tau,1}(x\rhd_\tau y)\rhd_\omega z+g_{\omega,\tau,2}x\rhd_\tau(y\rhd_\omega z)\\
&~+h_{\omega,\tau,1}(y\rhd_\tau x)\rhd_\omega z+h_{\omega,\tau,2}y\rhd_\tau(x\rhd_\omega z)\\
&~+i_{\omega,\tau,1}(y\rhd_\tau z)\rhd_\omega x+i_{\omega,\tau,2}y\rhd_\tau(z\rhd_\omega x)\\
&~+j_{\omega,\tau,1}(z\rhd_\tau y)\rhd_\omega x+j_{\omega,\tau,2}z\rhd_\tau(y\rhd_\omega x)\\
&~+k_{\omega,\tau,1}(x\rhd_\tau z)\rhd_\omega y+k_{\omega,\tau,2}x\rhd_\tau(z\rhd_\omega y)\\
&~+l_{\omega,\tau,1}(z\rhd_\tau x)\rhd_\omega y+l_{\omega,\tau,2}z\rhd_\tau(x\rhd_\omega y) \bigg).
\end{align*}
Next combining terms with the same coefficients, we get
\begin{align*}
r(x\otimes y\otimes z)=&\sum_{\omega, \tau\in \Omega}  \bigg(		b_{\omega,\tau,2}\Big((x\rhd_\omega y)\rhd_\tau z-x\rhd_\omega(y\rhd_\tau z)-(y\rhd_\omega x)\rhd_\tau z+y\rhd_\omega(x\rhd_\tau z)\Big)\\
&~+d_{\omega,\tau,2}\Big((y\rhd_\omega z)\rhd_\tau x-y\rhd_\omega(z\rhd_\tau x)-(z\rhd_\omega y)\rhd_\tau x+z\rhd_\omega(y\rhd_\tau x)\Big)\\
&~+f_{\omega,\tau,2}\Big((x\rhd_\omega z)\rhd_\tau y-x\rhd_\omega(z\rhd_\tau y)-(z\rhd_\omega x)\rhd_\tau y+z\rhd_\omega(x\rhd_\tau y)\Big)\\
&~+g_{\omega,\tau,1}\Big((x\rhd_\tau y)\rhd_\omega z-(x\rhd_\omega z)\rhd_\tau y)\Big)\\
&~+g_{\omega,\tau,2}\Big((x\rhd_\omega y)\rhd_\tau z-x\rhd_\omega (y\rhd_\tau z)-(x\rhd_\omega z)\rhd_\tau y+x\rhd_\tau(y\rhd_\omega z)\Big)\\
&~+h_{\omega,\tau,1}\Big((y\rhd_\tau x)\rhd_\omega z-(y\rhd_\omega z)\rhd_\tau x\Big)\\
&~+h_{\omega,\tau,2}\Big((x\rhd_\omega y)\rhd_\tau z-x\rhd_\omega(y\rhd_\tau z)-(y\rhd_\omega z)\rhd_\tau x+y\rhd_\tau(x\rhd_\omega z)\Big)\\
&~+i_{\omega,\tau,1}\Big((y\rhd_\tau z)\rhd_\omega x-(y\rhd_\omega x)\rhd_\tau z\Big)\\
&~+i_{\omega,\tau,2}\Big((y\rhd_\omega z)\rhd_\tau x-(y\rhd_\omega x)\rhd_\tau z-y\rhd_\omega(z\rhd_\tau x)+y\rhd_\tau(z\rhd_\omega x)\Big)\\
&~+j_{\omega,\tau,1}\Big((z\rhd_\tau y)\rhd_\omega x-(z\rhd_\omega x)\rhd_\tau y\Big)\\
&~+j_{\omega,\tau,2}\Big((y\rhd_\omega z)\rhd_\tau x-y\rhd_\omega(z\rhd_\tau x)-(z\rhd_\omega x)\rhd_\tau y+z\rhd_\tau(y\rhd_\omega x)\Big)\\
&~+k_{\omega,\tau,1}\Big((x\rhd_\tau z)\rhd_\omega y-(x\rhd_\omega y)\rhd_\tau z\Big)\\
&~+k_{\omega,\tau,2}\Big((x\rhd_\omega z)\rhd_\tau y-(x\rhd_\omega y)\rhd_\tau z-x\rhd_\omega (z\rhd_\tau y)+x\rhd_\tau(z\rhd_\omega y)\Big)\\
&~+l_{\omega,\tau,1}\Big((z\rhd_\tau x)\rhd_\omega y-(z\rhd_\omega y)\rhd_\tau x\Big)\\
&~+l_{\omega,\tau,2}\Big((x\rhd_\omega z)\rhd_\tau y-(z\rhd_\omega y)\rhd_\tau x-x\rhd_\omega(z\rhd_\tau y)+z\rhd_\tau(x\rhd_\omega y)\Big)\bigg),
\end{align*}
where the coefficients can be freely chosen independently. 	Thus by taking each of these coefficients to be $1$ and all the others to be $0$, we obtain the following ternary relations:
\begin{eqnarray}\mlabel{eq 65}
&(x\rhd_\omega y)\rhd_\tau z-x\rhd_\omega(y\rhd_\tau z)=(y\rhd_\omega x)\rhd_\tau z-y\rhd_\omega(x\rhd_\tau z),&
\\
\mlabel{eq 66}
&(x\rhd_\omega y)\rhd_\tau z-x\rhd_\omega (y\rhd_\tau z)=(x\rhd_\omega z)\rhd_\tau y-x\rhd_\tau(y\rhd_\omega z),&
\\
\mlabel{eq 67}
&(x\rhd_\tau y)\rhd_\omega z=(x\rhd_\omega z)\rhd_\tau y, \ \omega, \tau\in\Omega, &
\end{eqnarray}
together with those obtained by permuting $x, y, z$. Now if we plug Eq. \meqref{eq 67} into Eq.~\meqref{eq 66}, we get the second relation of the multi-Novikov algebra. Thus we have shown that the \comp binary quadratic relations of the commuting multi-differential algebra are precisely the ones for the multi-Novikov algebra. 

This completes the proof of $\ker \morpre\, \cap\, 3V^{\ot 2}\subseteq  \dfrel_{\omnov}$ and hence the proof of 
Theorem~\ref{thm:omnovfromcmdca}.
\end{proof}

\section{The \precompst of \ncmdcas} 
\mlabel{s:ncdifcaind}

\nc{\oncmnov}{\mathcal{N}_{{\rm NC}, \Omega}}	
\nc{\oncmdiff}{\mathcal{MD}iff_{\mathrm{NC}}}	

As another application of our general framework, we now determine the \mcompst of \ncmdcas. We will show that it is the following notion introduced in\,\mcite{WGZ1}. In fact, this notion was determined by means of the computations in this section. 
\begin{defi}\cite{WGZ1}
\mlabel{d:ncmna}
A {\bf \ncmna} is a vector space $\text{N}$ equipped with bilinear products $\rhd_\omega$ indexed by a set $\Omega$, which satisfy the following identities. 
\begin{eqnarray}
&(x \rhd_\omega y)\rhd_\tau z-x\rhd_\omega (y\rhd_\tau z)=(y\rhd_\omega x)\rhd_\tau z-y\rhd_\omega (x\rhd_\tau z),&
\mlabel{eq:ncmn1}
\\
&(x\rhd_\omega y)\rhd_\tau z=(x\rhd_\tau z)\rhd_\omega y,\quad  x,y,z\in \text{N}, \omega,\tau\in \Omega.&
\mlabel{eq:ncmn2}
\end{eqnarray}
\end{defi}
\begin{defi} Let $\Omega$ be a set and $V=\bfk\{\novop_\omega, \novop_\omega'\,|\, \omega\in \Omega\}$ be the $S_2$-module, regarded as an $\mathbb{S}$-module concentrated in degree 2, with basis $\{\novop_\omega,\novop_\omega'\,|\,\omega\in \Omega\}$ and $S_2$-action $\novop_\omega^{(12)}=\novop'$. Define 
\begin{equation}		
{\dfrel}_{\oncmnov}:=\bfk S_3\left\{ \left.
\begin{array}{c}
\novop_\alpha\circ_\rmi \novop_\beta -\novop'_\alpha\circ_\rmii \novop_\beta-\novop_\alpha\circ_\rmi \novop'_\beta +\novop'_\alpha\circ_\rmiii \novop'_\beta\\
\novop_\alpha\circ_\rmi \novop_\beta - \novop'_\alpha\circ_\rmii\novop'_\beta
\end{array}\, \right | \alpha, \beta\in \Omega
\right \}.
\mlabel{eq:ncmnovrel}
\end{equation}
The quotient operad $\oncmnov:=\calp(V,\dfrel_{\oncmnov})$ is called the {\bf operad of \ncmnas}.
\mlabel{d:oncmnov}
\end{defi}

We now show that the operad of \ncmnas is indeed the \precomp operad of $\oncmdca$. 

\begin{thm}
The operad $\oncmnov$ is the \precomp operad $\mpre(\oncmdca)$ of the operad $\oncmdca$ of \ncmdcas defined in Eq.~\meqref{eq:oncmdca}. 
More precisely, let $V=\bfk\{\novop_\omega, \novop_\omega'\,|\, \omega\in \Omega\}$ be the $\mathbb{S}$-module in Definition~\mref{d:oncmnov}. 
For the operadic homomorphism 
$$ \morpre_{\mathrm{NC}}: \calf(V) \longrightarrow \oncmdca, \quad	\novop_\omega\,\mapsto \mu(\id\ot \omega), \novop_\omega'\,\mapsto \mu'(\omega\ot \id), \quad  \omega\in \Omega,$$
we have $\ker \morpre_{\mathrm{NC}} \cap 3V^{\ot 2}= \dfrel_{\oncmnov}$.
\mlabel{thm:omnovfromncmdca}
\end{thm}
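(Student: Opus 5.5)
We follow the strategy of the proof of Theorem~\mref{thm:omnovfromcmdca}, now working in the free \ncmdca $\bfk_{{\rm NC},\Omega}\{x,y,z\}$ of Theorem~\mref{t:fncmdca} instead of $\bfk_\Omega\{x,y,z\}$.

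\emph{First inclusion.} We show $\dfrel_{\oncmnov}\subseteq \ker\morpre_{\mathrm{NC}}\cap 3V^{\ot 2}$. For $x\rhd_\omega y:=xD_\omega(y)$ in any \ncmdca, the Leibniz rule and commutativity give
\[
(x\rhd_\omega y)\rhd_\tau z-x\rhd_\omega(y\rhd_\tau z)=-xyD_\omega D_\tau(z).
\]
This expression is symmetric in $x,y$, which gives Eq.~\meqref{eq:ncmn1}. Next, $(x\rhd_\omega y)\rhd_\tau z=xD_\omega(y)D_\tau(z)$ is invariant under swapping $(\omega,y)\leftrightarrow(\tau,z)$, which gives Eq.~\meqref{eq:ncmn2}. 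Neither computation uses $D_\omega D_\tau=D_\tau D_\omega$. Since $\ker\morpre_{\mathrm{NC}}$ is $S_3$-stable, the $\bfk S_3$-span is also contained in it.

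\emph{Reverse inclusion.} Write a general $r\in 3V^{\ot2}$ exactly as in Eq.~\eqref{eq:relelt}, with coefficients $a_{\omega,\tau,i},\dots,l_{\omega,\tau,i}$. Expand $\morpre_{\mathrm{NC}}(r)(x\ot y\ot z)$ in $\bfk_{{\rm NC},\Omega}\{x,y,z\}$. The terms of the form $uD_\omega(v)D_\tau(w)$ collect exactly as before, since the underlying algebra is commutative, and give the same six equations. The second-order terms, however, now separate. For instance $a_{\omega,\tau,2}\,xyD_\omega D_\tau(z)$ and $g_{\omega,\tau,2}\,xyD_\tau D_\omega(z)$ multiply the distinct basis monomials $xy\,(d_\omega d_\tau)(z)$ and $xy\,(d_\tau d_\omega)(z)$. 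By Theorem~\mref{t:fncmdca} these are linearly independent. So each of the last three equations splits into two:
\[
a_{\omega,\tau,2}+b_{\omega,\tau,2}=0,\quad g_{\omega,\tau,2}+h_{\omega,\tau,2}=0,
\]
and similarly $c+d=0$, $i+j=0$, $e+f=0$, $k+l=0$ in the index $2$. Some care is needed when $\omega=\tau$ and with reindexing $\omega\leftrightarrow\tau$ in the sum. The cleanest way is to first rewrite $r$ so that each tree appears once per ordered pair $(\omega,\tau)$, absorbing the $g,\dots,l$ families into $a,\dots,f$ with swapped indices. Then each basis monomial receives a single coefficient equation.

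\emph{Solving the system.} Solve the nine-per-pair linear system by expressing $a_{\cdot,1},\dots,f_{\cdot,1}$ and $a_{\cdot,2},c_{\cdot,2},e_{\cdot,2}$ in terms of the free parameters. Substitute back into $r$ and regroup by free parameter, as in the proof of Theorem~\mref{thm:omnovfromcmdca}. Each free coefficient should multiply one of three kinds of relator:
\begin{itemize}
\item a left-symmetric relator of type Eq.~\meqref{eq:ncmn1} in the variables $(u,v,w)$, coming from $b_2,d_2,f_2$;
\item a relator $(u\rhd_\omega v)\rhd_\tau w-(u\rhd_\tau w)\rhd_\omega v$ of type Eq.~\meqref{eq:ncmn2}, coming from the first-order parameters;
\item possibly a combination of these two.
\end{itemize}
The main point to verify is that no analogue of Eq.~\meqref{eq 66} survives. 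In the commuting case, Eq.~\meqref{eq 66} arose because $a_2$ and $g_2$ could cancel against each other; here that coupling is exactly what has been removed. This bookkeeping is the main obstacle: keeping the index swaps consistent and checking that every regrouped relator lies in the $\bfk S_3$-span of the two generators of Eq.~\eqref{eq:ncmnovrel}.

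As a sanity check, one can compare dimensions. The kernel's dimension per pair of indices equals the number of free parameters, namely $24$ minus the number of independent equations. This should match $\dim\bigl(\dfrel_{\oncmnov}\bigr)(3)$, which can be computed directly.
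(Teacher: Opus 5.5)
Your plan is the paper's proof: expand $\Phi_{\mathrm{NC}}(r)$ in the free noncommuting multi-differential commutative algebra on $x,y,z$, observe that $xy\,D_\omega D_\tau(z)$ and $xy\,D_\tau D_\omega(z)$ are now distinct basis monomials so that the three second-order equations split into six, and then solve and regroup. Your direct check of the first inclusion and your attention to the overlap of the $a,\dots,f$ and $g,\dots,l$ families in Eq.~\eqref{eq:relelt} (including $\omega=\tau$) are more careful than the paper, which cites the first inclusion and passes over the overlap.

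The one slip is the count in your solving step. After the split there are twelve independent equations per pair (six first-order, six second-order), not nine. So besides $a_1,\dots,f_1$ and $a_2,c_2,e_2$ you must also eliminate $g_2,i_2,k_2$ (indices $\omega,\tau$ suppressed). This leaves the paper's twelve free families: $b_2,d_2,f_2$ (relators of type~\eqref{eq:ncmn1}), $g_1,h_1,i_1,j_1,k_1,l_1$ (type~\eqref{eq:ncmn2}), and $h_2,j_2,l_2$ (combinations of the two). The dimension count you offer as a sanity check in fact finishes the proof by itself, without any regrouping. On the $24$ tree monomials built from $\rhd_\omega,\rhd_\tau$ with $\omega\neq\tau$, the kernel has dimension $24-12=12$, which equals the number $6+6$ of independent $S_3$-translates of \eqref{eq:ncmn1} and \eqref{eq:ncmn2}. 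For $\omega=\tau$ one gets $12-6=6=3+3$.
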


\begin{proof} 
From~~\cite{WGZ1}, the  relators of ${\dfrel}_{\oncmnov}$ in Eq.~\meqref{eq:ncmnovrel} 
can be \comp by the relator $\dfrel_{\oncmdca}$ of the operad $\oncmdca$ of noncommuting multi-differential commutative algebras. 
Thus   $\dfrel_{\oncmnov}\subseteq \ker \morpre_{\mathrm{NC}} \cap 3V^{\ot 2}$. 

To prove the inverse inclusion: 
$\ker \morpre_{\mathrm{NC}} \cap 3V^{\ot 2}\subseteq \dfrel_{\oncmnov}$,
we first note that each relation $r(x\otimes y\otimes z)$ encoded by $r\in 3V^{\ot 2}$ is a $\bfk$-linear  combination of the form in Eq.~\eqref{eq:relelt}.   
If $r$ is also in $\ker \morpre_{\mathrm{NC}}$, 
then, for the free noncommuting multi-differential commutative algebra $(\bfk_{\rm NC}\{x,y,z\},  D_\Omega)$ constructed in Theorem~\mref{t:fncmdca}, 
the relation $\morpre_{\mathrm{NC}}(r)(x\otimes y\otimes z)=0$ encoded by $\morpre_{\mathrm{NC}}(r)=0\in \oncmnov$ holds in $(\bfk_{\rm NC}\{x,y,z\},  D_\Omega)$.
The \comp binary products $
\rhd_\omega, \omega\in \Omega$ on $\bfk_{\rm NC}\{x,y,z\}$ are defined by
$$x\rhd_\omega y:=\Phi(\rhd_\omega)(x\otimes y)=m(\id\otimes D_\omega)(x\otimes y)=xD_\omega (y), \quad  \omega\in \Omega.$$
Substituting these products into $r(x\otimes y\otimes z)$ in Eq.~\eqref{eq:relelt}, we obtain $\Phi(r)(x\otimes y\otimes z)$. Then, applying $\Phi(r)(x\otimes y\otimes z)=0$ together with the Leibniz rule, we have the equation
\begin{align*}
\Phi(r)(x\otimes y\otimes z)=&~\sum_{\omega,\tau\in \Omega}\Big( a_{\omega,\tau,1}x D_\omega(y) D_\tau(z)+a_{\omega,\tau,2}x D_\omega(y) D_\tau(z)+a_{\omega,\tau,2}xy D_\omega D_\tau(z)\\
&~+b_{\omega,\tau,1}y D_\omega(x) D_\tau(z)+b_{\omega,\tau,2}y D_\omega(x) D_\tau(z)+b_{\omega,\tau,2}yx D_\omega D_\tau(z)\\
&~+c_{\omega,\tau,1}y D_\omega(z) D_\tau(x)+c_{\omega,\tau,2}y D_\omega(z) D_\tau(x)+c_{\omega,\tau,2}yz D_\omega D_\tau(x)\\
&~+d_{\omega,\tau,1}z D_\omega(y) D_\tau(x)+d_{\omega,\tau,2}z D_\omega(y) D_\tau(x)+d_{\omega,\tau,2}zy D_\omega D_\tau(x)\\
&~+e_{\omega,\tau,1}x D_\omega(z) D_\tau(y)+e_{\omega,\tau,2}x D_\omega(z) D_\tau(y)+e_{\omega,\tau,2}xz D_\omega D_\tau(y)\\
&~+f_{\omega,\tau,1}z D_\omega(x) D_\tau(y)+f_{\omega,\tau,2}z D_\omega(x) D_\tau(y)+f_{\omega,\tau,2}zx D_\omega D_\tau(y)\\
&~+g_{\omega,\tau,1}x D_\tau(y) D_\omega(z)+g_{\omega,\tau,2}x D_\tau(y) D_\omega(z)+g_{\omega,\tau,2}xy D_\tau D_\omega(z)\\
&~+h_{\omega,\tau,1}y D_\tau(x) D_\omega(z)+h_{\omega,\tau,2}y D_\tau(x) D_\omega(z)+h_{\omega,\tau,2}yx D_\tau D_\omega(z)\\
&~+i_{\omega,\tau,1}y D_\tau(z) D_\omega(x)+i_{\omega,\tau,2}y D_\tau(z) D_\omega(x)+i_{\omega,\tau,2}yz D_\tau D_\omega(x)\\
&~+j_{\omega,\tau,1}z D_\tau(y) D_\omega(x)+j_{\omega,\tau,2}z D_\tau(y) D_\omega(x)+j_{\omega,\tau,2}zy D_\tau D_\omega(x)\\
&~+k_{\omega,\tau,1}x D_\tau(z) D_\omega(y)+k_{\omega,\tau,2}x D_\tau(z) D_\omega(y)+k_{\omega,\tau,2}xz D_\tau D_\omega(y)\\
&~+l_{\omega,\tau,1}z D_\tau(x) D_\omega(y)+l_{\omega,\tau,2}z D_\tau(x) D_\omega(y)+l_{\omega,\tau,2}zx D_\tau D_\omega(y)\Big)\\
=&~0.
\end{align*}
Combining like terms gives
\begin{align*}
\Phi(r)(x\otimes y\otimes z)=&~\sum_{\omega,\tau\in \Omega}\bigg(\Big(a_{\omega,\tau,1}+a_{\omega,\tau,2}+k_{\omega,\tau,1}+k_{\omega,\tau,2}\Big)x D_\omega(y) D_\tau(z)\\
&~+\Big(b_{\omega,\tau,1}+b_{\omega,\tau,2}+i_{\omega,\tau,1}+i_{\omega,\tau,2}\Big)y D_\omega(x) D_\tau(z)\\
&~+\Big(c_{\omega,\tau,1}+c_{\omega,\tau,2}+h_{\omega,\tau,1}+h_{\omega,\tau,2}\Big)y D_\omega(z) D_\tau(x)\\
&~+\Big(d_{\omega,\tau,1}+d_{\omega,\tau,2}+l_{\omega,\tau,1}+l_{\omega,\tau,2}\Big)    z D_\omega(y) D_\tau(x)\\
&~+\Big(e_{\omega,\tau,1}+e_{\omega,\tau,2}+g_{\omega,\tau,1}+g_{\omega,\tau,2}\Big)x D_\omega(z) D_\tau(y)\\
&~+\Big(f_{\omega,\tau,1}+f_{\omega,\tau,2}+j_{\omega,\tau,1}+j_{\omega,\tau,2}\Big)z D_\omega(x) D_\tau(y)\\
&~+\Big(a_{\omega,\tau,2}+b_{\omega,\tau,2}\Big)xy D_\omega D_\tau(z)\\
&~+\Big(c_{\omega,\tau,2}+d_{\omega,\tau,2}\Big)yz D_\omega D_\tau(x)\\
&~+\Big(e_{\omega,\tau,2}+f_{\omega,\tau,2}\Big)xz D_\omega D_\tau(y)\\
&~+\Big(g_{\omega,\tau,2}+h_{\omega,\tau,2}\Big)xy D_\tau D_\omega (z)\\
&~+\Big(i_{\omega,\tau,2}+j_{\omega,\tau,2}\Big)yz D_\tau D_\omega (x)\\
&~+\Big(k_{\omega,\tau,2}+l_{\omega,\tau,2}\Big)xz D_\tau D_\omega (y)\bigg)\\
&=0.
\end{align*}
The differential monomials in the sum are linearly independent as basis elements in the free noncommuting multi-differential commutative algebra. So the coefficients must be zero: 
\begin{align*}
&a_{\omega,\tau,1}+a_{\omega,\tau,2}+k_{\omega,\tau,1}+k_{\omega,\tau,2}=0,\\
&b_{\omega,\tau,1}+b_{\omega,\tau,2}+i_{\omega,\tau,1}+i_{\omega,\tau,2}=0,\\
&c_{\omega,\tau,1}+c_{\omega,\tau,2}+h_{\omega,\tau,1}+h_{\omega,\tau,2}=0,\\
&d_{\omega,\tau,1}+d_{\omega,\tau,2}+l_{\omega,\tau,1}+l_{\omega,\tau,2}=0,\\
&e_{\omega,\tau,1}+e_{\omega,\tau,2}+g_{\omega,\tau,1}+g_{\omega,\tau,2}=0,\\
&f_{\omega,\tau,1}+f_{\omega,\tau,2}+j_{\omega,\tau,1}+j_{\omega,\tau,2}=0,\\
&a_{\omega,\tau,2}+b_{\omega,\tau,2}=0,\\
&g_{\omega,\tau,2}+h_{\omega,\tau,2}=0,\\
&c_{\omega,\tau,2}+d_{\omega,\tau,2}=0,\\
&i_{\omega,\tau,2}+j_{\omega,\tau,2}=0,\\
&e_{\omega,\tau,2}+f_{\omega,\tau,2}=0,\\
&k_{\omega,\tau,2}+l_{\omega,\tau,2}=0,  \quad \omega, \tau\in \Omega. 
\end{align*}
Solving these equations, we obtain the following substitution relations
\begin{align*}
&a_{\omega,\tau,1}=-a_{\omega,\tau,2}-k_{\omega,\tau,1}-k_{\omega,\tau,2},\\
&b_{\omega,\tau,1}=-b_{\omega,\tau,2}-i_{\omega,\tau,1}-i_{\omega,\tau,2},\\
&c_{\omega,\tau,1}=-c_{\omega,\tau,2}-h_{\omega,\tau,1}-h_{\omega,\tau,2},\\
&d_{\omega,\tau,1}=-d_{\omega,\tau,2}-l_{\omega,\tau,1}-l_{\omega,\tau,2},\\
&e_{\omega,\tau,1}=-e_{\omega,\tau,2}-g_{\omega,\tau,1}-g_{\omega,\tau,2},\\
&f_{\omega,\tau,1}=-f_{\omega,\tau,2}-j_{\omega,\tau,1}-j_{\omega,\tau,2},\\
&a_{\omega,\tau,2}=-b_{\omega,\tau,2},\\
&g_{\omega,\tau,2}=-h_{\omega,\tau,2},\\
&c_{\omega,\tau,2}=-d_{\omega,\tau,2},\\
&i_{\omega,\tau,2}=-j_{\omega,\tau,2},\\
&e_{\omega,\tau,2}=-f_{\omega,\tau,2},\\
&k_{\omega,\tau,2}=-l_{\omega,\tau,2}, \quad \omega, \tau\in \Omega. 
\end{align*}
Applying these substitution relations into the original expression of $r(x\otimes y\otimes z)$ and combining like terms with respect to the coefficients,  
we obtain the following expression for $r\in \ker \morpre_{\mathrm{NC}} \cap 3V^{\ot 2}$: 
\begin{align*}
r(x\otimes y\otimes z)=&~\sum_{\omega,\tau\in \Omega} \bigg(b_{\omega,\tau,2}\Big((x\rhd_\omega y)\rhd_\tau z-x\rhd_\omega(y\rhd_\tau z)-(y\rhd_\omega x)\rhd_\tau z+y\rhd_\omega(x\rhd_\tau z)\Big)\\
&~+d_{\omega,\tau,2}\Big((y\rhd_\omega z)\rhd_\tau x-y\rhd_\omega(z\rhd_\tau x)-(z\rhd_\omega y)\rhd_\tau x+z\rhd_\omega(y\rhd_\tau x)\Big)\\
&~+f_{\omega,\tau,2}\Big((x\rhd_\omega z)\rhd_\tau y-x\rhd_\omega(z\rhd_\tau y)-(z\rhd_\omega x)\rhd_\tau y+z\rhd_\omega(x\rhd_\tau y)\Big)\\
&~+g_{\omega,\tau,1}\Big((x\rhd_\tau y)\rhd_\omega z-(x\rhd_\omega z)\rhd_\tau y)\Big)\\
&~+h_{\omega,\tau,1}\Big((y\rhd_\tau x)\rhd_\omega z-(y\rhd_\omega z)\rhd_\tau x\Big)\\
&~+h_{\omega,\tau,2}\Big((x\rhd_\omega z)\rhd_\tau y-(y\rhd_\omega z)\rhd_\tau x-x\rhd_\tau(y\rhd_\omega z)+y\rhd_\tau(x\rhd_\omega z)\Big)\\
&~+i_{\omega,\tau,1}\Big((y\rhd_\tau z)\rhd_\omega x-(y\rhd_\omega x)\rhd_\tau z\Big)\\
&~+j_{\omega,\tau,1}\Big((z\rhd_\tau y)\rhd_\omega x-(z\rhd_\omega x)\rhd_\tau y\Big)\\
&~+j_{\omega,\tau,2}\Big((y\rhd_\omega x)\rhd_\tau z-(z\rhd_\omega x)\rhd_\tau y-y\rhd_\tau(z\rhd_\omega x)+z\rhd_\tau(y\rhd_\omega x)\Big)\\
&~+k_{\omega,\tau,1}\Big((x\rhd_\tau z)\rhd_\omega y-(x\rhd_\omega y)\rhd_\tau z\Big)\\
&~+l_{\omega,\tau,1}\Big((z\rhd_\tau x)\rhd_\omega y-(z\rhd_\omega y)\rhd_\tau x\Big)\\
&~+l_{\omega,\tau,2}\Big((x\rhd_\omega y)\rhd_\tau z-(z\rhd_\omega y)\rhd_\tau x-x\rhd_\tau(z\rhd_\omega y)+z\rhd_\tau(x\rhd_\omega y)\Big)\bigg).
\end{align*}
By setting one of the above coefficients equal to $1$ and all the other equal to $0$, we obtain the noncommuting multi-Novikov relations
$$(x\rhd_\omega y)\rhd_\tau z-x\rhd_\omega(y\rhd_\tau z)=(y\rhd_\omega x)\rhd_\tau z-y\rhd_\omega(x\rhd_\tau z),$$
$$(x\rhd_\tau y)\rhd_\omega z=(x\rhd_\omega z)\rhd_\tau y,\ \omega,\tau\in \Omega,$$
together with those obtained from permuting $x,y,z$. 
This proves the inverse inclusion and hence  Theorem~\mref{thm:omnovfromncmdca}.
\end{proof}

\noindent
{\bf Acknowledgements.}
Xiaoyan Wang was  supported by  the National Key R$\&$D Program of China (No. 2024YFA1013803)  and by Shanghai Key Laboratory of PMMP (No. 22DZ2229014).
Huhu Zhang is supported by the Scientific Research Foundation of High Level Talents of Yulin University (2025GK12), Young Talent
Fund of Association for Science and Technology in Shaanxi, China (20250530) and Young Talent Fund of
Association for Science and Technology in Yulin (20250711).

\noindent
{\bf Declaration of interests. } The authors have no conflicts of interest to disclose.

\noindent
{\bf Data availability. } Data sharing is not applicable as no data were created or analyzed.

\end{document}